\newtheorem{thm}{Theorem}[section]
\newtheorem{theorem}[thm]{Theorem}
\newtheorem{proposition}[thm]{Proposition}
\newtheorem{corollary}[thm]{Corollary}
\newtheorem{lemma}[thm]{Lemma}
\theoremstyle{definition}
\newtheorem{definition}[thm]{Definition}
\newtheorem{construction}[thm]{Construction}
\theoremstyle{remark}
\newtheorem{remark}[thm]{Remark}
\newcommand{\C}{{\mathbb{C}}}
\newcommand{\Q}{{\mathbb{Q}}}
\newcommand{\Z}{{\mathbb{Z}}}
\begin{document}

\title[A skein-valued lift of the unidirectional $A_n$-quiver $Q$-dilogarithm relations]{A skein-valued lift of the unidirectional $A_n$-quiver $Q$-dilogarithm relations} 
\author{Matthias Scharitzer}
\address{Centre for Quantum Mathematics, SDU, Campusvej 55, 5230 Odense M, Denmark}
\email{matthias.scharitzer@gmail.com} 

\maketitle

\begin{abstract} 
  We show that the q-dilogarithm identities associated to the unidirectional $A_n$-quiver lift to the HOMFLYPT-skein algebra of a genus $n$ handlebody. 
\end{abstract}

\thispagestyle{empty} 

\section{Introduction}

\subsection{$Q$-Dilogarithm relations in quantum tori, following \cite[Section 1]{Keller}}

In this paper, we will consider generalizations of the celebrated pentagon relation:

\begin{theorem}(Pentagon relation \cite{schutzenberger,Faddeev1993,Faddeev1994}, as cited by \cite[Theorem 1.2.]{Keller})
   Consider the algebra of formal power series $\mathfrak{A}_{A_2}$ in two non-commutative variables $x_1,x_2$ with coeffcients in $\Q[q^{-\frac{1}{2}}][[q^{\frac{1}{2}}]]$ subject to the $q$-commutation relation $x_1x_2=qx_2 x_1$. Then inside $\mathfrak{A}_{A_2}$ the following relation holds:

    \begin{equation*}
        \mathcal{E}_q(x_{1})\mathcal{E}_q(x_{2}) = \mathcal{E}_q(x_{2})\mathcal{E}_q(q^{\frac{1}{2}}x_1x_2)\mathcal{E}_q(x_{1})
    \end{equation*}

    where $\mathcal{E}_q$ is the (exponential of) the $q$-dilogarithm, explicitly given by:

    \begin{equation}
    \mathcal{E}_q(x_{\alpha}) = \sum \frac{1}{\Pi_{0\leq i\leq j}q^{\frac{i}{2}} - q^{-\frac{i}{2}} } x_\alpha^j
\end{equation}
\end{theorem}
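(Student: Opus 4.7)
The plan is to follow the classical strategy, originating with Schützenberger, of reducing the pentagon to a $q$-binomial identity via direct expansion in the normal-ordered monomial basis $\{x_1^A x_2^B\}$ of $\mathfrak{A}_{A_2}$. A conceptually cleaner but computationally equivalent variant is the Faddeev--Kashaev conjugation trick, which I would present as an alternative.

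\emph{Step 1: Normal-order both sides.} The left-hand side $\mathcal{E}_q(x_1)\mathcal{E}_q(x_2)$ is already of the form $\sum_{A,B} \lambda_{A,B}\, x_1^A x_2^B$, with $\lambda_{A,B}$ a product of two coefficients from the series definition. For the right-hand side, I would first compute $(q^{1/2}x_1x_2)^k = q^{k(2-k)/2}\,x_1^k x_2^k$ using $x_1x_2 = qx_2x_1$, and then normal-order each term $x_2^a\cdot (q^{1/2}x_1x_2)^k\cdot x_1^b$ by moving all $x_1$'s to the left, picking up a factor of $q^{-1}$ for each swap.

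\emph{Step 2: Reduce to a $q$-binomial identity.} Setting $A = k+b$ and $B = a+k$ and equating the coefficients of $x_1^A x_2^B$ on both sides converts the pentagon into a purely combinatorial identity of the schematic form
\[
\frac{q^{AB}}{d_A d_B} \;=\; \sum_{k=0}^{\min(A,B)} \frac{q^{\alpha_k}}{d_{A-k}\, d_{B-k}\, d_k}, \qquad A,B\ge 0,
\]
where $d_n$ is the denominator in the definition of $\mathcal{E}_q$ and $\alpha_k$ is an explicit quadratic polynomial in $k$. After clearing denominators and rewriting the right-hand side in terms of the $q$-binomial coefficients $\binom{A}{k}_q\binom{B}{k}_q$, this becomes an instance of the $q$-Chu--Vandermonde (equivalently, Heine's) summation formula, which is classical.

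\emph{Step 3: Alternative conjugation approach; main obstacle.} Multiplying the pentagon on the left by $\mathcal{E}_q(x_2)^{-1}$ yields the equivalent conjugation identity
\[
\mathcal{E}_q(x_2)^{-1}\,\mathcal{E}_q(x_1)\,\mathcal{E}_q(x_2) \;=\; \mathcal{E}_q\!\left(q^{1/2}x_1x_2\right)\mathcal{E}_q(x_1).
\]
Setting $y := q^{1/2}x_1x_2$, a direct calculation gives $x_1 y = q\, y x_1$ and $y x_2 = q\, x_2 y$, so $\mathcal{E}_q(y)$ is well-defined and $q$-commutes with $x_1$ in the same way $x_2$ does. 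One then proves the conjugation identity by expanding $\mathcal{E}_q(x_1) = \sum_j d_j^{-1}\, x_1^j$, pushing each $x_1^j$ through $\mathcal{E}_q(x_2)$ via $x_1 x_2^n = q^n x_2^n x_1$, and applying a first-order difference equation for $\mathcal{E}_q$ derived in one line from the series definition. The main obstacle, common to both routes, is the precise bookkeeping of $q$-powers: all manipulations are formal in $\mathfrak{A}_{A_2}$ so convergence is never an issue, but sign and normalization conventions (in particular the $q^{1/2}$ prefactor inside the middle $\mathcal{E}_q$) must be tracked with care to avoid spurious factors.
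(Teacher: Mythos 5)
The paper offers no proof of this statement---it is quoted from Keller and the cited classical sources (Sch\"utzenberger, Faddeev--Volkov, Faddeev--Kashaev)---and your plan is essentially the argument those references give: expand both sides in the normal-ordered basis $x_1^Ax_2^B$ and reduce the pentagon to a $q$-Vandermonde/Heine-type summation, or equivalently run the conjugation/functional-equation route; both sketches are sound, and your computation $(q^{1/2}x_1x_2)^k=q^{k(2-k)/2}x_1^kx_2^k$ is correct under the stated relation $x_1x_2=qx_2x_1$. One caveat worth making explicit: with the series normalization as printed (the product should run over $1\le i\le j$), comparing the coefficient of $x_1x_2$ on both sides forces the middle argument to be $q^{-1/2}x_1x_2$ rather than $q^{+1/2}x_1x_2$, so the convention issue you flag in Step 3 is not merely cosmetic---carrying out your Step 2 for $A=B=1$ is exactly the check that pins down which power of $q^{1/2}$ the chosen normalization of $\mathcal{E}_q$ requires.
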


More generally, let $Q$ be an $A_n$-quiver. This quiver has no cycles, so one can consider some choice of order of the vertices $v_1<\dots<v_n$ which obey $v_i < v_j$ if there is an arrow from $v_i$ to $v_j$. Associated to this quiver one can define an antisymmetric form on the lattice $\mathbb{N}^n$ by setting $<e_i,e_j>$ equal to $\frac{1}{2}$ if there is an arrow from $v_i$ to $v_j$ and $0$ if there are no arrows between $v_i$ and $v_j$ where $e_i,e_j$ denote the $i$-th, respectively $j$-th unit vector. Starting from this one can consider the quantum torus $\mathfrak{A}_Q$ which is the algebra of power series in the variables $x_\alpha$ where $\alpha \in \mathbb{N}^n$ with coefficients in the ring $\Q[q^{-\frac{1}{2}}][[q^{\frac{1}{2}}]]$ subject to the relation $x_{\alpha}x_{\beta}=q^{<\alpha,\beta>}x_{\alpha+\beta}$.

The more general version of the pentagon relation for an arbitrary $A_n$-quiver, is obtained in the following way: Consider the set $\{V_1,\dots,V_N\}$ of indecomposable $Q$-representations. We endow this set with an order $V_1<\dots<V_n$ which is induced by $V_i<V_j$ if $Hom_Q(V_i,V_j) \neq 0$. Furthermore, to each indecomposable representation, we can associate its dimension vector $d_i \in \mathbb{N}^n$, where we identify $e_i$ with the simple representation which is $0$ on all vertices except above $v_i$ where it is $\C$. Then we have the following $Q$-relations:

\begin{theorem}(\cite{Reineke}, as cited by \cite[Corollary 1.7.]{Keller})
If $Q$ is an $A_n$-quiver and $v_1<\dots<v_n$ is an order on the vertices on $Q$ and $V_1<\dots<V_N$ is an order on the indecomposable representations $V_1,\dots,V_N$ such that $V_i<V_j$ if $Hom_Q(V_i,V_j) \neq 0$. Let $d_i$ be the dimension vectors of the indecomposable representations then we have the following relations:

\begin{equation} \label{Q quantum equation}
    \mathcal{E}_q(x_{1})\dots \mathcal{E}_q(x_{n}) = \mathcal{E}_q(x_{d_N})\dots\mathcal{E}_q(x_{d_1})
\end{equation}
    
\end{theorem}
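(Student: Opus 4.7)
The plan is to prove \eqref{Q quantum equation} by lifting both sides to the completed Ringel--Hall algebra $\hat{H}_Q$ of the category $\mathrm{rep}_{\mathbb{F}_q}(Q)$ and then applying a twisted integration map into $\mathfrak{A}_Q$. Following Reineke, the key object is the universal generating series
\[
\mathbb{E} \;=\; \sum_{[M]} \frac{[M]}{|\mathrm{Aut}(M)|} \;\in\; \hat{H}_Q,
\]
which admits two canonical factorizations coming from Harder--Narasimhan filtrations with respect to two different stability conditions; under the integration map $I\colon [M]\mapsto x_{d(M)}/|\mathrm{Aut}(M)|$, these factorizations become the two sides of \eqref{Q quantum equation}.

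The concrete steps are as follows. \emph{Step 1:} introduce the twisted Hall algebra with multiplication weighted by $q^{\langle d(M),d(N)\rangle}$ and verify that $I$ is an algebra map to $\mathfrak{A}_Q$; the antisymmetric form of the introduction matches the skew-symmetrization of the Euler form of $Q$. \emph{Step 2:} for any stability condition $Z$, the existence and uniqueness of HN filtrations give
\[
\mathbb{E} \;=\; \prod_{\mu \searrow} \mathbb{E}_{Z,\mu},
\]
where $\mathbb{E}_{Z,\mu}$ sums over $Z$-semistables of slope $\mu$. \emph{Step 3:} exhibit two stability conditions, one whose only semistable objects are the simples $S_1,\dots,S_n$ (ordered as the vertices), and an opposite one whose semistables of pairwise distinct slopes are exactly the indecomposables $V_1,\dots,V_N$ in the prescribed order. \emph{Step 4:} since each indecomposable of a Dynkin quiver is absolutely simple and rigid, a direct geometric-series computation in $\hat{H}_Q$ gives $I(\mathbb{E}_{S_i}) = \mathcal{E}_q(x_i)$ and $I(\mathbb{E}_{V_j}) = \mathcal{E}_q(x_{d_j})$; applying $I$ to the equality of the two factorizations then yields \eqref{Q quantum equation}.

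The main technical obstacle is the construction in Step 3 of a central charge $Z \colon \mathbb{N}^n \to \mathbb{H}$ whose HN strata are precisely the indecomposables in the required order. Concretely, one must choose $Z$ so that the slopes $\mu(d_j)$ decrease in the prescribed order $V_N > \dots > V_1$, and rule out any proper subrepresentation of an indecomposable $V_j$ destabilizing it; for the $A_n$ quiver this reduces to a combinatorial argument using the structure of the Auslander--Reiten quiver and the hypothesis that $\mathrm{Hom}_Q(V_i,V_j)\neq 0 \Rightarrow V_i<V_j$. The remaining issues -- convergence of the infinite products in the $q^{1/2}$-adic topology on $\mathfrak{A}_Q$ and well-definedness of $I$ on the completion -- are standard once one exploits the grading by dimension vector to truncate.
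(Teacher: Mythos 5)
This statement is quoted background in the paper (it is attributed to Reineke and to Keller's exposition and is not proved in the text), and your sketch is precisely the strategy of the cited source: twisted Hall algebra, integration map into $\mathfrak{A}_Q$, Harder--Narasimhan factorizations of the universal element $\mathbb{E}$, and a geometric-series evaluation on each (semi)stable class. Steps 1, 2 and 4 are standard and essentially fine, up to bookkeeping: the factor $1/|\mathrm{Aut}(M)|$ belongs in the element $\mathbb{E}$, not in the definition of $I$ (with your $I$ the map is not multiplicative; Reineke's integration map sends $[M]$ to a $q$-power times $x_{d(M)}$), the paper's form $\langle e_i,e_j\rangle=\pm\tfrac12$ is \emph{half} the skew-symmetrized Euler form (consistent because $x_\alpha x_\beta=q^{2\langle\alpha,\beta\rangle}x_\beta x_\alpha$), and you should state explicitly that the identity, verified over $\mathbb{F}_q$ for every prime power, therefore holds as a formal identity in $q^{1/2}$.

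The genuine gap is Step 3 in the form you propose for the right-hand side. The theorem allows \emph{any} total order on the indecomposables compatible with the Hom-condition, and such an order need not be induced by a linear central charge: a central charge realizes only rather special (convex, chamber-type) orders on the set of positive roots, whereas the admissible orders are all linear extensions of the partial order generated by the Hom-relations, and these classes do not coincide in general. Nor can you simply prove the identity for one convenient order and permute factors afterwards, because two indecomposables that are incomparable under the Hom-relation may still have $\mathrm{Ext}^1$ nonvanishing in one direction, in which case their $\mathcal{E}_q$-factors do \emph{not} commute in $\mathfrak{A}_Q$. The standard ways to close this are: (a) bypass stability for the right-hand side and prove the Hall-algebra factorization $\mathbb{E}=\mathbb{E}_{V_N}\cdots\mathbb{E}_{V_1}$ directly for an arbitrary admissible order, using Krull--Schmidt plus directedness (the relevant Hom- and Ext-vanishing) to show every module has a \emph{unique} filtration with layers $V_N^{a_N},\dots,V_1^{a_1}$ in that order; or (b) prove it for one order that is realized by a stability condition and then connect an arbitrary admissible order to it by transpositions of adjacent pairs with Hom and Ext vanishing in both directions (whose factors do commute), which requires checking that any pair with nonvanishing $\mathrm{Ext}^1$ is already comparable in the transitive closure of the Hom-order --- exactly the kind of analysis carried out for the unidirectional case in Lemma \ref{Quiver relation order}, but needed here for an arbitrary orientation of $A_n$. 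Your appeal to ``a combinatorial argument using the AR quiver'' gestures at this but, as written, supplies neither (a) nor (b), so the proof is incomplete at its crucial point.
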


The theorem above is true for a more general class of quivers for more details, see \cite{Reineke}.

\subsection{HOMFLYPT and linking skein algebras} \label{Section 1.2.}

\begin{figure}
    \begin{picture}(225,125)
    \put(0,0){\includegraphics[width=7cm]{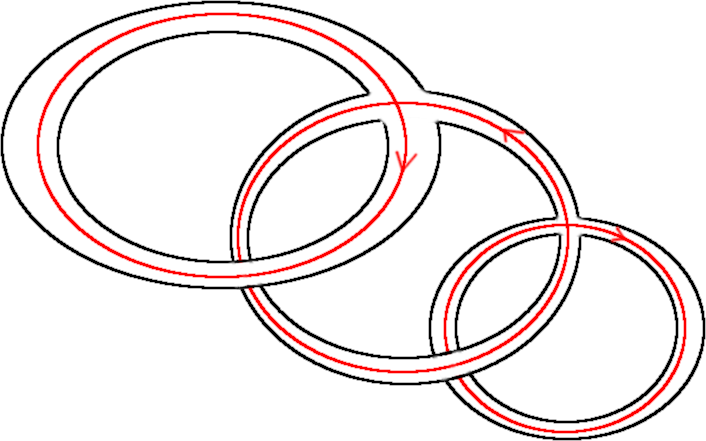}}

    \end{picture}
    \caption{The twisted disk $D_{n,tw}$ with $n=3$. The red oriented curves are, from left to right, $L_1$, $L_2$ and $L_3$.}  
    \label{Fig:Twisted Disk}
\end{figure}

From now on, let $Q$ be the unidirectional $A_n$ quiver. Then the quantum torus $\mathfrak{A}_Q$ appears geometrically in the following way: To a $3$-manifold $M$, we can associate the following two modules: Consider the module freely generated over $\Q [q^{-\frac{1}{2}},a^{\pm 1},c_{\lambda,\mu}^{-1}][[q^{\frac{1}{2}}]]$ by isotopy classes of framed links $K \subset M$, where $c_{\lambda,\mu}$ are eigenvalues of the sliding operator, see Section \ref{Subsection:Sliding Operator} for details.\footnote{Usually the HOMFLYPT module is defined over $\Z[a^{\pm 1},z^{\pm 1}]$, where $z=q^\frac{1}{2}-q^{-\frac{1}{2}}$ However for technical purposes, we need to work over this completed coefficient ring.} Fix an orientation of $M$, then we can consider the set of relations:

\begin{eqnarray*}
    \overcrossing - \undercrossing &=& (q^\frac{1}{2}-q^{-\frac{1}{2}}) \,\, \smoothing \\
    \bigcirc &=& \frac{a-a^{-1}}{q^\frac{1}{2}-q^{-\frac{1}{2}}}
\end{eqnarray*}

\begin{figure}
    \begin{picture}(150,40)
    \put(0,0){\includegraphics[width=5cm]{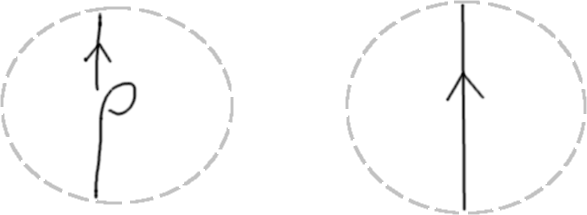}}

    \end{picture}
    \caption{If $L$ and $L'$ are isotopic but their framing is different then one can make the framing align along the whole knot except in a small neighborhood. Assume that, we endow this neighborhood with the framing of $L$. Then if $L'$ is obtanied from $L$ by a full twist, we need to add a loop as on the left-hand side, so that the ambient framing and the one along $L'$ agree.}  
    \label{Fig:Twist-identity}
\end{figure}

and $[L]=a[L']$ if the underlying knots of $L$ and $L'$ agree but the framing of $L'$ is obtained from $L$ by a full twist, see Figure \ref{Fig:Twist-identity}. The module obtained in this way is called the HOMFLYPT-skein module $Sk(M)$ of $M$. One can take a further quotient, given by:

\begin{eqnarray*}
    \overcrossing &=& q^{1/2} \,\, \smoothing \\ q^{\frac{1}{2}}&=&a
\end{eqnarray*}

The so obtained skein-module is sometimes called the linking skein $Lk(M)$ or also the $U(1)$-skein. In the cases of interested $M$ is either a genus $n$ handlebody $\mathcal{H}_n$ or a genus $n$ surface $\Sigma_n$ times interval $I$. Then the linking skein $Lk(M)$ is generated by one representative from each $H_1(M)$ class. 

Furthermore, $H_1(M)$ defines a grading of the skein-module. After fixing a strictly convex cone $\mathcal{C}=\mathbb{N}^k \subset H_1(M)$, we can consider the submodules $Sk_\mathcal{C}(M),Lk_\mathcal{C}(M)$ which is obtained by taking infinite sums of links with the restriction that any link appearing must have homology class in $\mathcal{C}$ and the sum may have only finitely many knots in any given homology class.

On $M=\mathcal{H}_n$ we can define an algebra operation in the following way: We can identify $\mathcal{H}_n$ with the product of the $n$-holed twisted disk $D_{n,tw}$, see Figure \ref{Fig:Twisted Disk}, and an interval $I$ \footnote{Strictly speaking, the smooth structures do not agree as smoothness detects the corners along the common boundary of $D_{n,tw}$ and $I$ whereas $\mathcal{H}_n$ has no corners. However, any knot representative can be slightly isotoped to be disjoint from this corner and thus there is no confusion about the smooth inclusion.}. Then the stacking operation in the interval direction, defines an algebra structure on $Lk_\mathcal{C}(\mathcal{H}_n)$. The choice of curves $L_1,\dots,L_n$, see Figure \ref{Fig:Twisted Disk}, defines a basis of $H_1(\mathcal{H}_n)$. Let $\mathcal{C}$ be the strictly convex cone generated by the $L_i$, then the algebra structure on $Lk_\mathcal{C}(\mathcal{H}_n)$ endowed with the (interval) stacking operation thus defines an identification of $Lk_\mathcal{C}(\mathcal{H}_n)$ with $\mathfrak{A}_Q$.\footnote{This identification is not quite the obvious one as $L_2L_1=q^{1}L_1L_2$. However, this choice is more convenient for notation later on. So, for this isomorphism we have to identify $x_i$ with $L_{n-i+1}$ and reverse the order of multiplication.} \footnote{One may note that the additional elements, $c_{\mu_1,\mu_2}$ are finite polynomials in $q^{-\frac{1}{2}},q^\frac{1}{2}$ so they are already invertible in $\Q[ q^{-\frac{1}{2}}][[ q^{\frac{1}{2}}]]$, so both $Lk_\mathcal{C}(\mathcal{H}_n)$ and $\mathfrak{A}_Q$ are generated over the same coeffcient ring.}
\subsection{Statement of the main result} \label{Subsection:Statement of the main result}

The indecomposable representations of $Q$ are given by representations $V_{[i,j]}$ indexed by intervals $[i,j]$ with $1 \leq i \leq j \leq n$ so that $V_{[i,j]}$ assigns $\C$ to the $i$-th through the $j$-th vertex with non-zero maps in-between and $0$ to all other vertices. So any dimension vector of an indecomposable representation is given by $d_{[i,j]}$ is given by an adjacent sequence of $1$'s. Similarly, we define for any interval $[i,j]$ $L_{[i,j]}$ to be the successive Dehn-twist of $L_i$ along $L_{i+1}$ through $L_j$.

Similarly, as for quantum tori, we can also define a skein-valued lift $\mathbb{E}$ of the $q$-dilogarithm \cite[Theorem 1.1.]{unknot} where $\gamma=a^{-1}$. We will not write it out explicitly here, as none of our results depend on the exact representation. See \cite[Section 2]{Scharitzer-Shende-2} for an exposition, on why this element is indeed a skein-valued lift of the quantum dilogarithm. We only note that in the expansion of $\mathbb{E}$ in \cite{unknot}, one needs to fix a solid torus inside $M$. So, if we write $\mathbb{E}(L)$ where $L \subset M$ is a framed one-component knot, we mean the insertion of the knots of $\mathbb{E}$ inside this torus. In addition, $\mathbb{E}(L)$ consists of finitely many knots of homology class $k|L|$ for each $k\geq 0$. So if $|L|\in \mathcal{C}\setminus \{0\}$ then $\mathbb{E}(L) \in Sk_\mathcal{C}(M)$. 

\begin{theorem}(Skein-valued lift of the $Q$-relations) \label{Main theorem}
    Let $D_{n,tw}$ be the twisted $n$-holed disk from Figure \ref{Fig:Twisted Disk} and $L_1,\dots,L_n$ be the curves indicated there. Furthermore, fix an order $\alpha_1<\dots<\alpha_N$ on $[1,1],[1,2],\dots,[1,n],\dots,[n,n]$ such that the irreducible representations $V_{\alpha_i}$ obey that $V_{\alpha_j}<V_{\alpha_i}$ if $Hom_Q(V_{\alpha_i},V_{\alpha_j}) \neq 0$. Then the following relation holds in $Sk_\mathcal{C}(\mathcal{H}_n)$:

    \begin{equation} \label{Main equation introduction}
        \mathbb{E}(L_{1})\dots \mathbb{E}(L_{n}) = \mathbb{E}(L_{\alpha_N})\dots\mathbb{E}(L_{\alpha_1})
    \end{equation}

    where $\mathcal{C}$ is the cone spanned by the $L_1,\dots,L_n$.
\end{theorem}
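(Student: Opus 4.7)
The strategy is to mimic, at the level of the HOMFLYPT-skein, the standard derivation of Reineke's identity in $\mathfrak{A}_Q$ from iterated applications of the pentagon relation together with commutations of $q$-dilogarithms whose arguments $q$-commute trivially. In the quantum torus, the $A_n$-identity can be proved by induction on $n$: one peels a generator off the left, applies the induction hypothesis to the remaining $n-1$ factors, and then transports the separated generator across the resulting product, using the pentagon at each adjacent pair whose skew pairing is nonzero and the obvious commutation when the pairing vanishes. Each such move admits a geometric counterpart in $Sk_\mathcal{C}(\mathcal{H}_n)$, and the plan is to execute the identical sequence of moves on the skein-valued LHS of~(\ref{Main equation introduction}).

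The engine of the proof is the base pentagon in $Sk_\mathcal{C}(\mathcal{H}_2)$,
\[
\mathbb{E}(L_1)\,\mathbb{E}(L_2) \;=\; \mathbb{E}(L_2)\,\mathbb{E}(L_{[1,2]})\,\mathbb{E}(L_1),
\]
which is the $n=2$ case of the theorem and should be available from the treatment in~\cite{Scharitzer-Shende-2}; if need be, it can be reverified by expanding $\mathbb{E}$ as in~\cite{unknot} and matching terms using the HOMFLYPT relations and Figure~\ref{Fig:Twist-identity}. The second ingredient is a skein-valued commutation lemma: if two framed knots $K,K'\subset\mathcal{H}_n$ can be isotoped into disjoint sub-handlebodies, then $\mathbb{E}(K)\mathbb{E}(K')=\mathbb{E}(K')\mathbb{E}(K)$ in the stacking product, simply because the term-by-term expansions of both sides are framed links supported in disjoint regions of $\mathcal{H}_n$.

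For the inductive step, I would apply the theorem for $A_{n-1}$ to the tail $\mathbb{E}(L_2)\cdots\mathbb{E}(L_n)$, viewed inside the embedded sub-handlebody $\mathcal{H}_{n-1}\subset\mathcal{H}_n$ spanned by holes $2,\dots,n$. This rewrites the tail as a product of $\mathbb{E}(L_{[i,j]})$ with $2\le i\le j\le n$ in the Hom-compatible order for the sub-quiver. It then remains to push $\mathbb{E}(L_1)$ to the right through this product. For each $\mathbb{E}(L_{[i,j]})$ with $i\ge 3$, the curves $L_1$ and $L_{[i,j]}$ can be pushed into disjoint sub-handlebodies of $\mathcal{H}_n$, so the commutation lemma applies. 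For each $\mathbb{E}(L_{[2,j]})$, the two curves fit into a standard two-hole twisted sub-disk of $D_{n,tw}$, so the base pentagon applies and produces the middle factor $\mathbb{E}(L_{[1,j]})$. A straightforward combinatorial check shows that the resulting word is precisely $\mathbb{E}(L_{\alpha_N})\cdots\mathbb{E}(L_{\alpha_1})$ in the Hom-compatible order for the full $A_n$-quiver.

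The main obstacle is the geometric bookkeeping underlying the pentagon moves in the inductive step: for each $j$ one must exhibit an embedded two-hole twisted disk in $D_{n,tw}$ inside which $L_1$ and $L_{[2,j]}$ appear as the standard generators of that sub-disk, and one must verify that the Dehn twist of $L_1$ along $L_{[2,j]}$ produced by the pentagon agrees, as a framed knot, with the curve $L_{[1,j]}$ defined in Section~\ref{Subsection:Statement of the main result}. This reduces to an explicit but nontrivial isotopy and framing computation in $D_{n,tw}$; in addition, one must transport the normalization of $\mathbb{E}$ (the ambient solid torus implicit in its expansion from~\cite{unknot}) consistently through each embedded sub-disk. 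Once these geometric identifications are pinned down, the induction closes and~(\ref{Main equation introduction}) follows.
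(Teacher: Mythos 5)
Your strategy (induct on $n$, use the known $A_2$ skein pentagon of \cite{Nakamura,Hu} for the pairs $(L_1,L_{[2,j]})$, and a disjointness commutation for the pairs $(L_1,L_{[i,j]})$ with $i\ge 3$) is genuinely different from the paper's, which never takes the pentagon as an input: the paper realizes both sides of Equation \ref{Main equation introduction} as products $\Psi_{E_1,\dots,E_k}$ attached to two mutation sequences of cubic planar graphs from the square graph to the canoe graph, and identifies them via the rigidity statement of Proposition \ref{Psi's agree} (face operators, the sliding operator Lemma \ref{Sliding operator lemma}, and one-dimensionality of $Sk(S^3)$). Your scheme is sound at the level of the quantum torus, but as a skein proof it has two genuine gaps. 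First, the step you yourself flag as the ``main obstacle'' is not bookkeeping but the actual mathematical content: to invoke the $A_2$ relation for $(L_1,L_{[2,j]})$ you must produce an embedding of the framed configuration $D_{2,tw}\times I\hookrightarrow \mathcal{H}_n$ carrying the standard generators to $L_1$ and $L_{[2,j]}$ \emph{with the correct relative framings} (the relation $[L]=a[L']$ makes the pentagon framing-sensitive), verify that the middle curve produced by the pentagon is $L_{[1,j]}$ as a framed knot, and check compatibility of the $\mathbb{E}$-normalization (the choice of solid torus) and of the completions $Sk_{\mathcal{C}'}\rightarrow Sk_{\mathcal{C}}$ under pushforward. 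None of this is carried out, and it is exactly the kind of explicit skein computation that the paper's graph-mutation calculus (Corollary \ref{Mutation sequence corollary} together with Constructions \ref{Short sequence construction} and \ref{Long sequence construction}) is designed to replace; without it the proposal is a plan rather than a proof.

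Second, the theorem quantifies over an \emph{arbitrary} Hom-compatible order $\alpha_1<\dots<\alpha_N$, whereas your induction produces one specific such order (for $A_3$ it yields $[1,1],[1,2],[2,2],[1,3],[2,3],[3,3]$). To conclude for every admissible order you would additionally need that any two admissible orders are connected by transpositions of adjacent incomparable intervals (standard for linear extensions of a poset), and, for each incomparable --- that is, nested --- pair such as $[2,2]$ and $[1,3]$, that $\mathbb{E}(L_{[i,j]})$ and $\mathbb{E}(L_{[k,l]})$ commute in $Sk_\mathcal{C}(\mathcal{H}_n)$. Your commutation lemma covers only curves that can be isotoped into disjoint sub-handlebodies, and the disjoinability of $L_{[i,j]}$ and $L_{[k,l]}$ for nested intervals is a further geometric claim you neither state nor verify. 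The paper treats arbitrary admissible orders uniformly through Lemma \ref{Quiver-equivalent formulation of sequence} and the long mutation sequence, so this issue never arises there.
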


     The concatenation of the map $Sk_\mathcal{C}(\mathcal{H}_n) \rightarrow Lk_\mathcal{C}(\mathcal{H}_n)$ with the identification of $Lk_\mathcal{C}(\mathcal{H}_n) \cong \mathfrak{A}_Q$ sends Equation \ref{Main equation introduction} to Equation \ref{Q quantum equation}.

Proofs for these relations in the case of the $A_2$-quiver appeared in earlier works by Nakamura \cite{Nakamura} and Hu \cite{Hu}. Their proofs work more directly with calculations of skeins whereas this manuscript is based on the techniques of \cite{Scharitzer-Shende,Scharitzer-Shende-2,HSZ}. The upshot of using these papers is that they repackage many elaborate calculations of skein-relations into an easy to use graphical calculus of cubic planar graphs which allows us to keep the actual computation of skeins to a minimum.

\vspace{2mm}

{\bf Acknowledgements.}  I want to especially thank Vivek Shende for suggesting this question and mentoring during the preparation of this manuscript. I would also like to thank Gard Olav Helle for helpful discussions. This work is supported by Novo Nordisk Foundation grant NNF20OC0066298.

\section{Characterizations of the order of indecomposable representations}

In this section, we will state two lemmas which help us characterize the order described by Keller \cite{Keller} more clearly.

\begin{lemma} \label{Quiver relation order}
    The condition $Hom_{A_n}(V_{\alpha},V_{\beta}) \neq 0$ implies $\beta \leq \alpha$ on the set of all intervalls $[i,j]$ where $1 \leq i \leq j \leq n$ is equivalent to the condition that any order of that set must obey $[i,j]<[i,j+1]$ and $[i,j]<[i+1,j]$.
    \end{lemma}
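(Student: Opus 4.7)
The plan is to derive both conditions from the explicit characterization
\[
\Hom_{A_n}(V_{[i,j]}, V_{[k,l]}) \neq 0 \iff k \leq i \leq l \leq j,
\]
which is a standard computation for the unidirectional $A_n$-quiver. First I would verify this by a direct calculation: a morphism $\phi \colon V_{[i,j]} \to V_{[k,l]}$ is specified by scalars $\phi_m \in \C$ at the common vertices $m \in [i,j] \cap [k,l]$, and commutativity with the identity structure maps on both sides forces these scalars to be equal to a single $c \in \C$. Checking commutativity at the two boundary arrows of the intersection shows $c = 0$ unless the left endpoint of $[k,l]$ lies at or before that of $[i,j]$ and the right endpoint of $[k,l]$ lies at or before that of $[i,j]$, yielding exactly the stated inequalities.

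For the direction from the Hom-condition to the two atomic inequalities: any order satisfying ``$\Hom(V_\alpha, V_\beta) \neq 0 \Rightarrow \beta \leq \alpha$'' automatically satisfies both required inequalities, because the characterization gives $\Hom(V_{[i,j+1]}, V_{[i,j]}) \neq 0$ (take $(k,l) = (i,j)$ and verify $i \leq i \leq j \leq j+1$), and analogously $\Hom(V_{[i+1,j]}, V_{[i,j]}) \neq 0$.

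For the converse direction: given any $\alpha = [i,j]$ and $\beta = [k,l]$ with $\Hom(V_\alpha, V_\beta) \neq 0$, i.e.\ $k \leq i \leq l \leq j$, I would build the chain
\[
[k,l] \leq [k+1,l] \leq \cdots \leq [i,l] \leq [i,l+1] \leq \cdots \leq [i,j],
\]
where each step is an instance of one of the two assumed atomic inequalities. All intermediate intervals are legitimate: those of the form $[m,l]$ with $k \leq m \leq i$ satisfy $m \leq i \leq l$, and those of the form $[i,m]$ with $l \leq m \leq j$ satisfy $i \leq l \leq m$. Transitivity of the order then yields $\beta \leq \alpha$.

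There is no serious obstacle here; the whole argument rests on the explicit $\Hom$-vanishing criterion, and the only thing worth double-checking is that every interval appearing in the chain above satisfies $i' \leq j'$, which is guaranteed precisely by the inequality $i \leq l$ encoded in the Hom-nonvanishing condition.
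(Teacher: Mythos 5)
Your proposal is correct and follows essentially the same route as the paper: both rest on the explicit vanishing criterion $\Hom(V_{[i,j]},V_{[k,l]})\neq 0 \iff k\leq i\leq l\leq j$ (the paper verifies it by analyzing the commutative squares, you by the equivalent direct scalar computation) and then deduce the equivalence of the two order conditions from it. Your write-up is in fact slightly more complete on the converse direction, where the chain $[k,l]\leq[k+1,l]\leq\cdots\leq[i,l]\leq[i,l+1]\leq\cdots\leq[i,j]$ plus transitivity makes explicit what the paper compresses into a single sentence.
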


\begin{proof}
   Let $V_{[i,j]}$ and $V_{[i',j']}$ be two indecomposable representations of the given type. We claim that $Hom_Q(V_{[i,j]},V_{[i',j']})=0$ if and only if $i<i'$ or $j<j'$ or the intervals are disjoint. This directly implies that $[i,j] \leq [i',j']$ if $i \leq i'$ and $j=j'$ or $i=i'$ and $j \leq j'$. Since $\alpha_1,\dots,\alpha_N$ exhausts all such intervals this immediately simplifies to the claimed order conditions. 
   
   We prove the claim by studying the appearing commutative squares. Let $V_{a,b}$ for $a,b=1,2$ be the four corners of the commutative square and we have oriented arrows from $V_{1,1}$ to $V_{1,2},V_{2,1}$ and from $V_{1,2},V_{2,1}$ to $V_{2,2}$. The corners are either labelled by $0$ or $\C$. By assumption, we consider only indecomposable representations, so the arrows between $V_{1,1}$ and $V_{1,2}$, respectively $V_{2,1}$ and $V_{2,2}$ are non-zero iff both are labelled by $\C$. We will argue that the above restriction on $Hom_Q$ comes from studying those commutative squares which have at most $1$ vertex labelled with $0$. The other commutative squares either impose no restriction on the vertical maps or all involved maps are already $0$. So, consider the square where all vertices are labelled with $\C$. Then, one sees that $\mu_{lower}f_1=\mu_{upper}f_{2}$ so $f_{1}$ is non-zero iff $f_{2}$ is non-zero. This implies that if there is a sequence of commutative squares where all corners are $\C$, either all vertical maps are non-zero or all are $0$. Indeed, all squares with only $\C$ appearing on the corners are connected for $Hom_Q(V_{[i,j]},V_{[i',j']})$.
   
   Next, consider the square where $V_{1,2}$ is $0$. Then $f_2$ is automatically $0$ which implies $\mu_{lower}f_1=0$ and thus $f_1$ must be $0$. This square appears in $Hom_Q(V_{[i,j]},V_{[i',j']})$ if $j<j'$. Next, assume that $V_{2,1}$ is $0$. Then automatically $f_1=0$ and by commutativity $\mu_{upper}f_2=0$ thus $f_2$ is $0$. This square appears in $Hom_Q(V_{[i,j]},V_{[i',j']})$ if $i< i'$. One directly verifies that in the other two squares there is also at most one non-trivial vertical map, however it is pre-composed by the $0$-map. Thus the commutativity condition is void and thus the vertical map is arbitrary. So, we see that if either $j<j'$ or $i<i'$ then one vertical map between two copies of $\C$ is $0$. Since, all commutative squares with $\C$ on the corners are connected to one another, this shows that for $[i,j]\cap [i',j']\neq \emptyset$ $Hom_Q(V_{[i,j]},V_{[i',j']})=0$ iff $i<i'$ or $j<j'$. The other $Hom$-sets are obviously $0$ as well, if the sets are disjoint, as all vertical arrows either have $0$ in the domain or target.
\end{proof}

\begin{lemma} \label{Quiver-equivalent formulation of sequence}
    Let $A_1,\dots,A_N$ be some enumeration of $[1,1],[1,2],[2,2],\dots,[1,n],\dots,[n,n]$ which obeys that $[i,j]<[i,j+1]$ and $[i,j]<[i+1,j]$. This sequence can be equivalently described by a sequence $a_0,\dots,a_n$ of $n$-tuples $a_k=(i_1,\dots,i_n)$ which fulfill the following:
    \begin{enumerate}
        \item $i_j\leq j$ for all $j=1,\dots,n$;
        \item $a_0=(0,0,\dots,0)$;
        \item Let $M_k$ be the maximal number such that the first $M_k$ entries of $a_k$ agree with the first $M_k$ entries of $(1,\dots,n)$. Then the subsequence of $a_k$ obtained from deleting the first $M_k-1$ entries is non-increasing;
        \item $a_{k}$ is obtained from $a_{k-1}$ by increasing the first entry of any maximal constant subsequence of the last $n-M_{k-1}$ entries of $a_{k-1}$ by $1$.
    \end{enumerate}
\end{lemma}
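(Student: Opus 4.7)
The plan is to construct an explicit bijection between the two descriptions. Given an enumeration $A_1, \dots, A_N$ satisfying $[i,j] < [i, j+1]$ and $[i,j] < [i+1, j]$, I define $a_k = (i_1^{(k)}, \dots, i_n^{(k)})$ by letting $i_j^{(k)}$ be the number of intervals in $\{A_1, \dots, A_k\}$ with right endpoint $j$. Conditions (1) and (2) are immediate. The inverse map takes a sequence $(a_k)$ satisfying (1)--(4), identifies the unique coordinate $j$ for which the transition $a_{k-1} \to a_k$ increases $i_j$ from $v$ to $v+1$, and sets $A_k := [v+1, j]$.

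To verify (3), I first observe that the generating relations extend to the partial order $[i,j] \leq [i',j']$ iff $i \leq i'$ and $j \leq j'$. Hence the intervals with fixed right endpoint $j$ are added in the order $[1,j], [2,j], \dots, [j,j]$, and the set present at step $k$ with right endpoint $j$ is exactly $\{[1,j], \dots, [i_j^{(k)}, j]\}$. If $i_j^{(k)} < j$, then $[i_j^{(k)}+1, j]$ is absent, which forces $[i_j^{(k)}+1, j']$ absent for every $j' > j$, giving $i_{j'}^{(k)} \leq i_j^{(k)}$. Conversely, if $j$ is maxed out, then every $[i', j]$ with $i' \leq j$ is present, so for each $j' < j$ every $[i', j']$ with $i' \leq j'$ is present, and $j'$ is maxed out as well. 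Thus the maxed-out positions form a prefix $\{1, \dots, M_k\}$, and starting from position $M_k$ the sequence $M_k, i_{M_k+1}^{(k)}, \dots, i_n^{(k)}$ is non-increasing, which is exactly (3).

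Condition (4) characterizes the allowed step $a_{k-1} \to a_k$: adding $A_k = [v+1, j]$ is legal iff every predecessor $[i', j']$ with $i' \leq v+1$ and $j' \leq j$ is already present. For $j' \leq M_{k-1}$ this is automatic, while for $M_{k-1} < j' < j$ it requires $i_{j'}^{(k-1)} \geq v+1$; combined with the non-increasing tail from (3), this is equivalent to $i_{j-1}^{(k-1)} > v$ or $j = M_{k-1}+1$, i.e., position $j$ begins a maximal constant run in $(i_{M_{k-1}+1}^{(k-1)}, \dots, i_n^{(k-1)})$. The main obstacle is this bookkeeping at the boundary between the maxed-out prefix and the non-increasing tail; once it is settled, both directions of the bijection fall out mechanically, and the recovered enumeration automatically respects the defining order conditions.
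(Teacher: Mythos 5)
Your proposal is correct and follows essentially the same route as the paper: you encode the enumeration by the per-right-endpoint counters (which, as you note, coincide with the paper's ``largest $i$ such that $[i,j]$ has appeared'' because the present intervals in each column form a prefix), verify properties (1)--(4) from the fact that the enumeration is a linear extension of the product order, and invert by reading off the coordinate changed at each step. The only cosmetic difference is that you organize the checks of (3) and (4) around an explicit legality criterion for adding $[v+1,j]$, whereas the paper argues the same points by direct contradiction.
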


\begin{proof}
    Let $A_1,\dots,A_k$ be the initial segment of such an enumeration then we define the $j$-th entry of $a_k$ by the largest $i$ such that $[i,j]$ appears in $A_1,\dots,A_k$, otherwise set $i_j$ to be $0$. This directly implies the property $(1)$ as $[i,j]$ only exists for $i\leq j$. Property $(2)$ is necessarily true as $a_0$ is defined on the empty list and so no $[i,j]$ appeared yet. Property $(3)$ follows directly from observing that the $(j+1)$-th entry $i_{j+1}$ of $a_k$ being larger than $i_{j}$ the $j$-th entry would imply that $[i_{j+1},j+1]$ appeared but $[i_{j},j]$ did not yet appear which is impossible as $A_1,\dots,A_N$ exhausts all intervals and $ [i_{j},j] < [i_{j}+1,j] < [i_{j}+1,j+1] < \dots <[i_{j+1}, j+1]$ and thus $[i_{j}+1,j]$ could not be fit into $A_1,\dots,A_N$. This condition is void if $[i_{j}+1,j]$ does not exist, i.e. $i_{j}=j$. So this condition does not apply to the initial segment which is thus allowed to be strictly increasing. Property $(4)$ follows similarly as the sequence would otherwise either violate property $(3)$ or stagnate, meaning that $A_{k+1}$ is $[i,j]$ but $[i_{j},j]$, for $i<i_j$, already appeared before which is impossible as $[i,j]<\dots <[i_j,j]$.

    On the other hand, let $a_0,\dots,a_N$ be a sequence as described above. By property $(4)$, $a_1=(1,0,\dots,0)$, so set $A_1=[1,1]$ and for $k>1$ define $A_k$ to be $[i_j,j]$, where $j$ is the unique entry of $a_k$ from Property $4$ and $i_j$ is the entry of $a_k$. This cannot produce an element $[i,j]$ with $i>j$ as $i=j$ is only true in $a_{k-1}$ in the first $M_{k-1}$ entries and these remain the same by Property $(4)$. Since the list has $N+1$ entries which by property $(4)$ are necessarily different from one another this list must exhaust all allowed intervals. Assume that $A_{k}$ is the interval $[i,j]$ then we claim that both $[i-1,j]$ and $[i,j-1]$ must appear in the list earlier. Since the $j$-th entry can only ever be raised by $1$ and starts at $0$, some earlier $A_{k'}<A_k$ must have been the interval $[i-1,j]$. Similarly, if $A_k=[i,j]$ then by Property $(4)$ the $(j-1)$-th entry in $a_{k-1}$ must at least be $i$, as otherwise either $a_{k-1}$ failed to be non-increasing or the $j$-th entry was not the first entry of the maximal constant subsequence with value $i-1$. This completes the argument.
\end{proof}

\section{Background and results from \cite{Scharitzer-Shende, Scharitzer-Shende-2,HSZ}}

In this section, we recall some of the basic set-up from \cite{Scharitzer-Shende, Scharitzer-Shende-2,HSZ} that we will use later on. These works are very different in nature, while \cite{Scharitzer-Shende, Scharitzer-Shende-2} were motivated mostly by geometry, the second group \cite{HSZ} approached these questions mostly from the algebraic direction. Even though, for our purposes there is little difference between these two works, we will more closely follow the notation of \cite{Scharitzer-Shende, Scharitzer-Shende-2}.

\subsection{Boundary inclusion and further completions} \label{Section 3.1.}

In Subsection \ref{Section 1.2.}, we defined an algebra structure on the skein-module $Sk(\mathcal{H}_n)$ by identifying $\mathcal{H}_n \cong D_{n,tw} \times I$. Similarly, we define an algebra action on $Sk(\Sigma_n \times I)$. After fixing an identification of $\Sigma_n \cong \partial \mathcal{H}_n$, we also obtain a left action of $Sk(\Sigma_n \times I)$ on $Sk(\mathcal{H}_n)$, this last action we term "boundary inclusion". One readily observes, that once one fixes a subset $D_{n,tw} \subset \Sigma_n$ so that the identification of $\Sigma_n \cong \partial \mathcal{H}_n$ matches with the identification $\mathcal{H}_n \cong D_{n,tw} \times I$, then the stacking operation of $D_{n,tw} \times I$ filters through the inclusions of $D_{n,tw} \times I \subset \Sigma \times I \subset \mathcal{H}_n$ and so the action of $Sk(\Sigma_n \times I) \subset Sk(\mathcal{H}_n)$ when restricted to $Sk(D_{n,tw}\times I) \subset Sk(\Sigma_n \times I)$ coincides with the left stacking action of Subsection \ref{Section 1.2.}.

Let $\mathcal{C}$ be a strictly convex cone in $H_1(\mathcal{H}_n)$ and $\mathcal{C}'$ be a strictly convex cone in $H_1(\Sigma_n)$. Assume that the boundary inclusion $\iota$ fulfills $\iota^*\mathcal{C}' \subset \mathcal{C}$ and $\mathcal{C}'\cap ker(\iota^*(H_1(\Sigma_n))= \{0\}$ then the boundary inclusion map $Sk_{\mathcal{C}'}(\Sigma_n \times I) \rightarrow Sk_{\mathcal{C}}(\mathcal{H}_n)$ is well-defined. However, while our elements $\mathbb{E}$ will always be elements of such completions, we will need to consider products with finitely many knots which live outside such a strictly convex cone. So let $h_1,\dots,h_l \in H_1(M)$ be a finite list of homology class elements. Then denote by $Sk_{\mathcal{C},h_1,\dots,h_l}(M)$ the skein module which allows infinite sums of elements of $Sk(M)$ but the sum is finite in each homology class and is empty outside the union of the affine strictly convex cones $h_i+\mathcal{C}$. Fix $\mathcal{C}$ and $\mathcal{C}'$ strictly convex cones in $H_1(M)$, respectively $H_1(\Sigma)$, finitely many homology classes $h_1,\dots,h_l \in H_1(M)$, respectively $h'_1\dots,h'_k \in H_1(\Sigma_n)$ and an identification of $\Sigma_n \cong \partial M$. If the boundary inclusion $\iota$ preserves the cones, i.e. $\mathcal{C}'\cap ker(\iota)=\{0\}$ and $\iota^* (\bigcup h'_i+\mathcal{C}') \subset \bigcup (h_i+\mathcal{C})$ then there is an induced boundary inclusion map $Sk_{\mathcal{C' },h'_1,\dots,h'_k}(\Sigma_n \times I) \rightarrow Sk_{\mathcal{C},h_1,\dots,h_l}(M)$. We will often abuse notation and equate the modules $Sk_{\mathcal{C}}(M)$ and $Sk_{\mathcal{C},h_1,\dots,h_l}(M)$. However, any element of the type $\mathbb{E}$ which appears in this paper, will live in the submodule $Sk_{\mathcal{C}}(M)$.

\subsection{Twisted skein modules} \label{twisted skein module section}

\begin{definition}
    Let $M^3$ be a compact oriented $3$-manifold with boundary, $\Sigma$ its boundary and $\mathfrak{p}$ an oriented $1$-chain which is closed relative to the boundary . Then we define the $\mathfrak{p}$-twisted skein-module $Sk(M^3,\mathfrak{p})$ as the $\Q [a^{\pm 1},c_{\lambda,\mu}^{-1},q^{-\frac{1}{2}}][[q^{ \frac{1}{2}}]]$-module whose generators are all framed links inside $M^3 \setminus \mathfrak{p}$ modulo the HOMFLYP-skein relations and the (signed) framing line relation which is given by:
    \begin{equation*}
        \overcrossing = -a^{-1} \undercrossing
    \end{equation*}
    where the line pointing to the left is a section of $\mathfrak{p}$ and the right pointing line is a section of the knot. We call $\mathfrak{p}$ the framing lines.
\end{definition}

Assume that $\mathfrak{p}$ is exact relative to the boundary and $D$ a $2$-chain bounding $\mathfrak{p}$ then we define the $D$-untwisting map by:

\begin{eqnarray*}
    Sk(M^3,\mathfrak{p}) &\rightarrow& Sk(M^3) \\
    \mathbf{}[L] &\mapsto& (-a)^{D \cap L}[L]
\end{eqnarray*}

In this paper, all twisted skein modules have $M=\Sigma_n \times I$ as underlying manifold. So after fixing a finite number of oriented points $\mathfrak{p} \subset \Sigma_n$ there is an evident analogue of the boundary inclusion/stacking operation on $Sk(\Sigma\times I,\mathfrak{p}\times I)$. Similarly, if we let $\mathcal{C}$ be a strictly positive cone, we can define the $(\mathfrak{p}\times I)$-twisted skein-module $Sk_\mathcal{C}(\Sigma_n \times I, \mathfrak{p}\times I)$.

\subsection{The sliding operator} \label{Subsection:Sliding Operator}

For this subsection, assume that $M^3$ is a handlebody and $\mathfrak{p}=\emptyset$. Assume that $c$ is a simple closed curve in the boundary $\Sigma$ of $M^3$ which becomes contractible after including it in $M^3$. Then we can define the sliding operator: $([c]-[\bigcirc])$. Intuitively, this operator measures how often any given link $L$ passes through a disk $D$ with $\partial D=c$ non-trivially. In fact, one can prove the following lemma:

\begin{lemma} \label{Sliding operator lemma}
    Let $\Psi \in Sk_\mathcal{C}(M^3)$ and $c$ be a simple closed curve in the boundary of $M^3$. If $[c]\Psi-[\bigcirc]\Psi=0$ then $\Psi \in Sk_\mathcal{C}(M^3; D)$ where $D$ is some disk bounding $c$ and $Sk_\mathcal{C}(M^3; D_1,\dots,D_k)$ denotes the submodule of  $Sk_\mathcal{C}(M^3)$ generated by all framed links which do not intersect the disks $D_1,\dots,D_k$.
\end{lemma}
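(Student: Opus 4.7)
The plan is to diagonalize the sliding operator $[c]-[\bigcirc]$ by decomposing elements of $Sk_\mathcal{C}(M^3)$ in a basis adapted to the bounding disk $D$, and then exploit the invertibility of the eigenvalues $c_{\lambda,\mu}$ that has been built into the coefficient ring. Fix a bounding disk $D$ of $c$ and isotope each representative framed link $L$ so that $L \pitchfork D$, meeting $D$ transversely in finitely many oriented points; near $D$, $L$ looks like a collection of oriented parallel strands. Using the standard Morton--Aiston idempotent decomposition of parallel HOMFLYPT strands (equivalently, the pair-of-Young-diagrams basis for the skein of the annular neighbourhood of an arc through $D$), one rewrites each such $L$ as a linear combination of cabled links $L_{\lambda,\mu}$, where $(\lambda,\mu)$ encodes the idempotent colouring of the positively and negatively oriented strands crossing $D$. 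The local replacement does not change the ambient homology class, so it is compatible with the completion and yields a decomposition $\Psi = \sum \alpha_{L,\lambda,\mu}\, L_{\lambda,\mu}$ in $Sk_\mathcal{C}(M^3)$ in which the $(\lambda,\mu)=(\emptyset,\emptyset)$ terms are exactly those already supported in $Sk_\mathcal{C}(M^3; D)$.

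Next I would verify that $[c]$ acts diagonally in this basis, $[c]\,L_{\lambda,\mu} = \kappa_{\lambda,\mu}\, L_{\lambda,\mu}$, with $\kappa_{\emptyset,\emptyset}=[\bigcirc]$. The geometric input is that once $c$ is pushed slightly off $\partial M^3$ into the collar, it bounds the same disk $D$ in $M^3$, and can be isotoped to a small meridian linking the cabled strands at $D$. Morton's sliding/cabling calculus then shows that such a meridian acts as a scalar on each idempotent-coloured cable, and these scalars are (by definition) the eigenvalues appearing in the coefficient ring. The corresponding eigenvalues of $[c]-[\bigcirc]$ are the differences $c_{\lambda,\mu} := \kappa_{\lambda,\mu}-[\bigcirc]$, which vanish precisely at $(\lambda,\mu)=(\emptyset,\emptyset)$.

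Applying the hypothesis $([c]-[\bigcirc])\Psi = 0$ to the diagonal expansion gives $\sum \alpha_{L,\lambda,\mu}\, c_{\lambda,\mu}\, L_{\lambda,\mu} = 0$. Since the coefficient ring $\Q[a^{\pm 1}, c_{\lambda,\mu}^{-1}, q^{-\frac{1}{2}}][[q^{\frac{1}{2}}]]$ has $c_{\lambda,\mu}^{-1}$ adjoined for all $(\lambda,\mu)\neq(\emptyset,\emptyset)$, the coefficients $\alpha_{L,\lambda,\mu}$ must vanish outside of $(\emptyset,\emptyset)$; hence $\Psi \in Sk_\mathcal{C}(M^3; D)$. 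I expect the main obstacle to be checking that the Morton--Aiston normal form and the diagonalization of $[c]$ transfer cleanly to the completion $Sk_\mathcal{C}$: one must see that the idempotent rewriting commutes with the infinite sums, and that the eigenvalue argument applies termwise. Here strict convexity of $\mathcal{C}$ is the key input, since it guarantees that each homology class receives only finitely many contributions and the idempotent rewriting is a homology-preserving finite linear combination, so the linear-algebraic conclusion survives the completion.
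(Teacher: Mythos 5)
The paper does not actually prove this lemma itself; it is imported from \cite[Lemma 2.2]{Gilmer-Zhong-connectsum}, \cite[Corollary 8.5]{Scharitzer-Shende-2} and \cite[Lemma 7.2]{HSZ}, and the mechanism you propose -- isotope links transverse to a disk $D$ bounding $c$, decompose the strands through $D$ by Hecke-algebra idempotents indexed by pairs of partitions, observe that $c$ pushed into the collar becomes a meridian of these strands acting by a scalar $\kappa_{\lambda,\mu}$, and use that $c_{\lambda,\mu}=\kappa_{\lambda,\mu}-[\bigcirc]$ has been inverted in the coefficient ring -- is exactly the mechanism behind those references, as the remark following the lemma confirms. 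Your treatment of the completion (argue homology class by homology class, using strict convexity of $\mathcal{C}$ to get finite sums) is also the right one.

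There is, however, a genuine gap at the final step. From $\sum \alpha_{L,\lambda,\mu}\,c_{\lambda,\mu}\,L_{\lambda,\mu}=0$ you conclude that all coefficients with $(\lambda,\mu)\neq(\emptyset,\emptyset)$ vanish; this needs the idempotent-coloured links $L_{\lambda,\mu}$ to be linearly independent over the coefficient ring, which is unjustified (and in general false): for varying $L$ they merely span, and isotopies and skein relations in $M^3$ away from $D$ impose relations among them, so an element killed by $[c]-[\bigcirc]$ could a priori have a nonzero expansion in nontrivial blocks. A second, related problem is that once both orientations cross $D$ the identity $[c]\,L_{\lambda,\mu}=\kappa_{\lambda,\mu}\,L_{\lambda,\mu}$ is only valid modulo ``turnback'' terms, i.e. modulo links meeting $D$ in strictly fewer points. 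Both issues are repaired simultaneously by the device the cited proofs use: filter the skein module (in a fixed homology class) by geometric intersection number with $D$. The operator $[c]-[\bigcirc]$ preserves this filtration; on the associated graded piece in positive filtration degree only blocks $(\lambda,\mu)\neq(\emptyset,\emptyset)$ occur, the orthogonal idempotents genuinely split the graded piece into a direct sum, and the operator acts on each summand by the invertible scalar $c_{\lambda,\mu}$. Hence $([c]-[\bigcirc])\Psi=0$ kills the top graded piece of $\Psi$, and a downward induction rewrites $\Psi$ as a combination of links disjoint from $D$, which is precisely the assertion $\Psi\in Sk_\mathcal{C}(M^3;D)$. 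So the eigenvalue mechanism and the role of the inverted $c_{\lambda,\mu}$ are right, but the unjustified linear-independence claim must be replaced by this filtration-and-induction argument (or an equivalent device).
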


The idea of this lemma goes back to \cite[Lemma 2.2]{Gilmer-Zhong-connectsum}. The version written above is a slight adaptation of the lemmas which appeared in \cite[Corollary 8.5]{Scharitzer-Shende-2} and \cite[Lemma 7.2]{HSZ}.

\begin{remark}
    As mentioned in the Introduction, see Section \ref{Section 1.2.}, we needed to adjoint the inverses of certain elements $c_{\mu_1,\mu_2}$ to the coefficient ring of our skein-modules: These are exactly the eigenvalues of the sliding operator $([c]-[\bigcirc])$ above and these are indexed by pairs of Young diagrams where $(\mu_1,\mu_2) \neq (\emptyset,\emptyset)$, for details see \cite{Gilmer-Zhong-connectsum}.
\end{remark}

\subsection{Skeins associated to cubic planar graphs}

Let $\Gamma\subset S^2$ be a (loop-free) cubic planar graph and fix a map $\pi$ from $\Sigma_g \rightarrow S^2$ to the double cover branched over the vertices of $\Gamma$. Each edge of $\Gamma$ lifts to a simple closed curve $E$ on $\Sigma_g$. Similarly, each face of $\Gamma$ lifts to two polygons which intersect each other at its bounding vertices. There is a coherent way to mark the lifts of the faces with $+$ and $-$ such that if two such polygons are separated by an edge then one is marked with $+$ and one with $-$, see for instance the wavefront description \cite{Treumann-Zaslow}. This induces an orientation on the set $\{E\}$ of lifts of the edges of $\Gamma$: Each half of $E$ (separated by the vertices of the corresponding edges) is a boundary curve of a face marked with $+$. This induces an orientation on $E$ as part of the boundary of $f$. This assignment is coherent, as the vertices where different faces meet are trivalent and thus the double cover is $6$-valent. As the faces marked with $+$ and $-$ have to alternate along edges, this means that both lifts of adjacent faces marked with $+$ have to remain on the same side of $E$.

On planar trivalent graphs there is a well-defined operation called the graph mutation. Usually this is depicted as in the first two pictures of Figure \ref{Fig:Mutation}, where one cannot uniquely identify the vertices before and after mutation. In particular, there are two such ways. One that we call the positive and the other the negative mutation. If we fix the identification of the vertices (and faces) as indicated in the second and third picture of Figure \ref{Fig:Mutation} then we obtain the positive mutation.

\begin{figure}
    \begin{picture}(550,125)
    \put(0,0){\includegraphics[width=15cm]{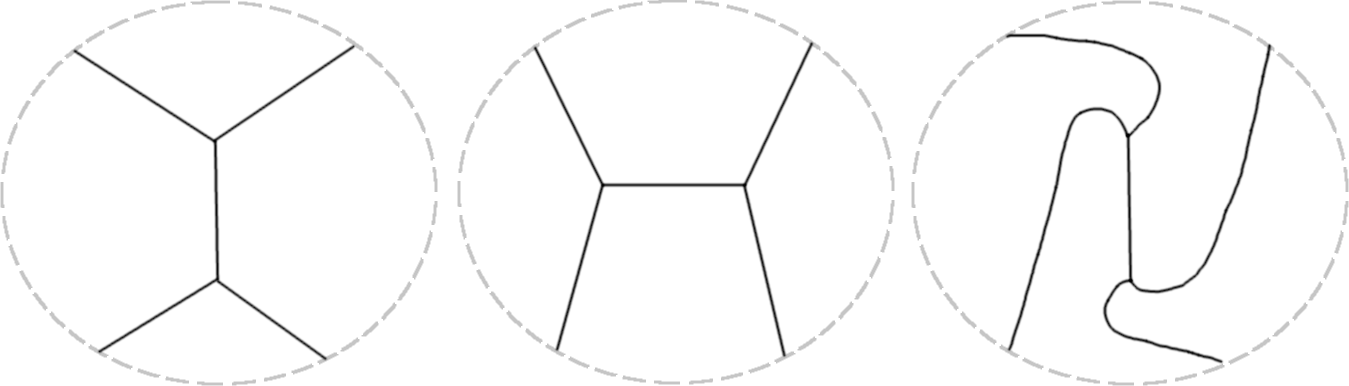}}
    \put(75,50){$E$}
    \put(360,50){$-E$}
    \put(315,50){$\tau_E$}
    \put(390,50){$\tau_E$}

    \end{picture}
    \caption{On the left the neighborhood of an edge $E$ in a trivalent graph. In the middle, the graph obtained by "graph mutation" at this edge. Once we fix one identification of the vertices in the left and middle pictures, we obtain the right picture, which is the so-called positige mutation. The labels in the first and third picture indicate the change of homotopy class. The homotopy class of the double lift of the central edge is reversed while those of the edges labeled with $\tau_E$ are Dehn-twisted along $E$.}  
    \label{Fig:Mutation}
\end{figure}

\begin{definition}(\cite[Lemma 9.1.]{Scharitzer-Shende-2},\cite[Definition 5.7.]{HSZ})
    Let $\Gamma$ be a (loop-free) cubic planar graph and $\pi:\Sigma_n \rightarrow S^2$ a branched double cover and $E$ an edge of $\Gamma$ which does not bound a bigon. Then, we define $\Gamma_E$ the graph obtained from $\Gamma$ by positive mutation along $E$ to be the graph which agrees with $\Gamma$ outside a neighborhood of $E$ and inside a neighborhood of $E$ the graph $\Gamma$ and $\Gamma_E$ differ as indicated in the first and third picture of Figure \ref{Fig:Mutation}. 
\end{definition}

In this work, we will only consider positive mutations and we have to record the relative change of homotopy classes before and after positive mutation. The change of homotopy class after fixing $\pi:\Sigma_n\rightarrow S^2$ and identifying edges is depicted in Figure \ref{Fig:Mutation}.

\begin{definition} (\cite[Definition 2.4.]{Scharitzer-Shende}/\cite[Equation 5.1.1./Example 5.1.]{HSZ}) \label{Face relation-definition}
    Let $\Gamma$ be a cubic planar graph, fix a cover $\pi:\Sigma_n \rightarrow S^2$ and fix a point $p_f$ in the interior of each face. Define $\mathfrak{p}$ to be the set of preimages of all $p_f$ oriented such that the point in the face marked with $+$ is positive and the point in the face marked with $-$ is negative. Then for each face $f$ and each vertex $v$ adjacent to $f$, we can define an associated element in $Sk(\Sigma_n \times I, \mathfrak{p}\times I)$:

     $$\mathcal{A}_{\Gamma,f,v}=a^{-1}[\bigcirc] + \sum_{1 \leq k \leq n-1}[l_{v}]$$

     where $n$ is the number of vertices adjacent to $f$ and $l_k$ is a knot fulfilling the following properties:

     \begin{enumerate}
         \item $l_k$ is the double lift of a path in $f$ from $v$ to $v_k$ the $k$-th vertex from $v$ counted counterclockwise which stays in a neighborhood of $\partial f$ such that the point $p_f$ stays outside this region.
         \item $l_k$ is framed by the positive $I$ vectorfield.
     \end{enumerate}
\end{definition}

\begin{theorem}(\cite[Proposition A.5.]{Scharitzer-Shende-2}/\cite[Theorem 5.8.]{HSZ}) \label{Tangle conjugation equation}
Let $\Gamma$ be a cubic planar graph with branched lift $\pi$, E the double lift of an edge of $\Gamma$, a strictly convex cone $\mathcal{C} \subset H_1(\Sigma_n)$ such that $E \in \mathcal{C}$ and $E$ is not contractible. Let $f$ be a face of $\Gamma$ and $v$ be a vertex of $f$ which is not adjacent to $E$. Then we have the following relation:

$$\mathcal{A}_{\Gamma_E,f,v}\mathbb{E}(E) = \mathbb{E}(E) \mathcal{A}_{\Gamma,f,v}$$

where $\Gamma_E$ is the graph obtained by positively mutating $\Gamma$ along $E$.
\end{theorem}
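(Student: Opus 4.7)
The plan is to reduce the identity to a purely local statement near the edge $E$ and its two adjacent faces, and then invoke the uniqueness-style characterization of $\mathbb{E}(E)$ underpinning the sliding-operator technology. First I would dispose of the trivial case: if the face $f$ is not one of the (at most two) faces of $\Gamma$ adjacent to $E$, then the mutation leaves the boundary of $f$ unchanged, so $\mathcal{A}_{\Gamma_E,f,v} = \mathcal{A}_{\Gamma,f,v}$; moreover, every summand $[l_v^k]$ in that face relation has a knot representative supported away from a tubular neighborhood of $E$, so it commutes term-by-term with $\mathbb{E}(E)$ inside $Sk_\mathcal{C}(\Sigma_n \times I,\mathfrak{p} \times I)$ on homological grounds (their supports can be made $I$-disjoint).

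The content is thus when $f$ is adjacent to $E$. In that case I would unpack the local picture from Figure \ref{Fig:Mutation} explicitly. Because $v$ is, by hypothesis, not adjacent to $E$, the starting vertex of each boundary walk defining $l_v^k$ sits away from the mutation disk, so the walks split into two classes: (i) those $l_v^k$ that reach their target $v_k$ without entering the mutation disk — these are the same paths in $\Gamma$ and in $\Gamma_E$, up to the sign/orientation bookkeeping of the branched cover, and (ii) those that do traverse the disk — these are related in the two graphs by the positive Dehn twist $\tau_E$ along $E$ indicated on the right of Figure \ref{Fig:Mutation}. The identity I must prove therefore reduces to verifying, for each walk $l_v^k$ passing through the mutation disk, that
$$[\tau_E(l_v^k)]\,\mathbb{E}(E) \;=\; \mathbb{E}(E)\,[l_v^k]$$
inside $Sk_\mathcal{C}(\Sigma_n \times I, \mathfrak{p} \times I)$, while terms of type (i) continue to commute for support reasons.

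To establish that intertwining relation I would appeal to the characterizing property of $\mathbb{E}$ used in \cite{Scharitzer-Shende-2,HSZ}: conjugation by $\mathbb{E}(E)$ acts as the Dehn twist $\tau_E$ on any knot class that crosses $E$ transversally once. The cleanest route is via Lemma \ref{Sliding operator lemma}. I would let $\Psi := [\tau_E(l_v^k)]\,\mathbb{E}(E) - \mathbb{E}(E)\,[l_v^k]$ and show that $\Psi$ is annihilated by the sliding operator $[c]-[\bigcirc]$ associated to a simple closed curve $c$ in $\partial(\Sigma_n \times I)$ bounding a disk that sits in the complement of both representatives; then $\Psi$ descends to the submodule of skeins disjoint from that disk, in which both knots $l_v^k$ and $\tau_E(l_v^k)$ become isotopic (the mutation disk is killed), so $\Psi = 0$. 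This is the same mechanism used in the proof of Theorem \ref{Tangle conjugation equation} in the cited references, and summing over $k$ with the coefficient $a^{-1}$ on $[\bigcirc]$ reassembles the full face relations.

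The main obstacle is step three: verifying the sliding-operator hypothesis for $\Psi$ requires a careful choice of the disk $D$ bounding $c$ so that it separates the two sides of the mutation disk without intersecting either $l_v^k$ or $\tau_E(l_v^k)$ or the cabling supporting $\mathbb{E}(E)$. The hypothesis that $v$ is not adjacent to $E$ and that $E \in \mathcal{C}$ is not contractible is precisely what makes such a disk available — the first ensures $l_v^k$ has an endpoint outside the mutation region, and the second ensures $\mathbb{E}(E)$ lies in a bona fide strictly convex completion so that infinite sums make sense under the sliding operation. Once this geometric setup is secured, the algebraic identity falls out.
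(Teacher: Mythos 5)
You should first note that the paper does not prove Theorem \ref{Tangle conjugation equation} at all: it imports it verbatim from \cite[Proposition A.5]{Scharitzer-Shende-2} and \cite[Theorem 5.8]{HSZ}, so the relevant comparison is with the proofs there. Independently of that, your proposal has a genuine gap at its core. The claimed characterizing property --- that conjugation by $\mathbb{E}(E)$ acts as the Dehn twist $\tau_E$ on any single knot class crossing $E$ once --- is false, and the reduction to the term-by-term identities $[\tau_E(l_v^k)]\,\mathbb{E}(E)=\mathbb{E}(E)\,[l_v^k]$ is structurally impossible. Sliding one strand through the cabling supporting $\mathbb{E}(E)$ does not return a single knot: resolving its crossings with the cables produces an infinite sum, the skein lift of the cluster transformation $y\mapsto y(1+q^{1/2}x)$; this is already visible in the linking-skein quotient, where conjugation of a variable by $\mathcal{E}_q(x_\alpha)$ multiplies it by a binomial-type series rather than permuting monomials. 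Correspondingly, when $f$ is adjacent to $E$, mutation changes the number of vertices of $f$ (the two faces containing $E$ each lose a boundary vertex, the two faces at its endpoints each gain one), so $\mathcal{A}_{\Gamma_E,f,v}$ and $\mathcal{A}_{\Gamma,f,v}$ do not even have the same number of summands, and no bijective matching of walks can intertwine them. The entire content of the theorem is that only the specific sums $\mathcal{A}_{\Gamma,f,v}$ intertwine: the extra terms created when one walk is pushed through the cable must reassemble into the contributions of neighboring walks. The proofs in the cited references carry out exactly this local skein computation, using the defining recursion (the wavefunction/annulus property of $\mathbb{E}$ from \cite{unknot}) for a strand crossing the cable, not any single-knot conjugation formula.

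Your proposed mechanism for the key step also cannot run as stated. Lemma \ref{Sliding operator lemma} is formulated for a handlebody with $\mathfrak{p}=\emptyset$ and needs the curve $c$ to bound a disk in the filling, whereas the identity to be proven lives in $Sk_{\mathcal{C}}(\Sigma_n\times I,\mathfrak{p}\times I)$, where no essential simple closed curve bounds a disk; there is no compressing disk ``separating the two sides of the mutation region,'' and $l_v^k$ and $\tau_E(l_v^k)$ are not isotopic in the complement of any such disk, since they differ by a twist along an essential curve. Even granting membership of your $\Psi$ in a submodule of links missing a disk, that alone does not force $\Psi=0$ without a leading-coefficient argument of the kind Proposition \ref{Psi's agree} runs only after pushing into the handlebody $\mathcal{H}_n$. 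Finally, your trivial case is mis-delimited: mutation at $E$ affects four faces (the two containing $E$ and the two at its endpoints), not ``at most two''; for an endpoint face the boundary walks do change while $v$ can still satisfy the hypothesis, so that case is part of the content --- though your disjoint-support commutation argument is fine for the genuinely far faces, whose edge lifts are untouched by the mutation.
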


\begin{corollary}(\cite[Corollary A.6.]{Scharitzer-Shende-2}) \label{Mutation sequence corollary}
    Let $\Gamma_0$ be a cubic planar graph with branched lift $\pi$, $\mathcal{C} \subset H_1(\Sigma_n)$ be a strictly convex cone and $E_1,\dots,E_k$ be a sequences of simple closed curves on $\Sigma_n$ such that:
    
    \begin{enumerate}
        \item $E_i \in \mathcal{C}$;
        \item $E_i$ is not contractible  for $i=1,\dots,k$;
        \item $E_i$ is the oriented double lift of an edge of $\Gamma_{i-1}$ where $\Gamma_0,\Gamma_1,\dots,\Gamma_k$ are inductively defined so that $\Gamma_i$ is the positive mutation of $\Gamma_{i-1}$ along $E_i$. 
    \end{enumerate} Then we can define $\Psi_{E_1,\dots,E_k}:=\mathbf{E}(E_k)\dots\mathbf{E}(E_1) \in Sk_\mathcal{C}(\Sigma_n \times I, \mathfrak{p} \times I)$ which fulfills:

    $$\mathcal{A}_{\Gamma_k,f,v}\Psi_{E_1,\dots,E_k} = \Psi_{E_1,\dots,E_k}\mathcal{A}_{\Gamma,f,v} $$
    for each face $f$ of $\Gamma$ where $v$ is a vertex of $f$ which under the identification of vertices of the $\Gamma_0,\dots,\Gamma_n$ is not adjacent to any $E_i$.
\end{corollary}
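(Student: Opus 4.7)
The plan is to prove the statement by induction on $k$, with Theorem \ref{Tangle conjugation equation} providing both the base case and the engine for the inductive step. For $k=1$ we simply have $\Psi_{E_1} = \mathbf{E}(E_1)$; since $v$ is not adjacent to $E_1$ by hypothesis, Theorem \ref{Tangle conjugation equation} applies directly to $\Gamma_0$ and the edge whose oriented double lift is $E_1$, giving $\mathcal{A}_{\Gamma_1,f,v}\mathbf{E}(E_1) = \mathbf{E}(E_1)\mathcal{A}_{\Gamma_0,f,v}$.

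For the inductive step, I would assume the claim for sequences of length $k-1$ and write $\Psi_{E_1,\ldots,E_k} = \mathbf{E}(E_k)\,\Psi_{E_1,\ldots,E_{k-1}}$. Since $v$ remains non-adjacent to $E_k$, Theorem \ref{Tangle conjugation equation} applied to the graph $\Gamma_{k-1}$ yields
$$\mathcal{A}_{\Gamma_k,f,v}\mathbf{E}(E_k) = \mathbf{E}(E_k)\mathcal{A}_{\Gamma_{k-1},f,v},$$
and composing with the inductive hypothesis
$$\mathcal{A}_{\Gamma_{k-1},f,v}\Psi_{E_1,\ldots,E_{k-1}} = \Psi_{E_1,\ldots,E_{k-1}}\mathcal{A}_{\Gamma_0,f,v}$$
gives
$$\mathcal{A}_{\Gamma_k,f,v}\Psi_{E_1,\ldots,E_k} = \mathbf{E}(E_k)\Psi_{E_1,\ldots,E_{k-1}}\mathcal{A}_{\Gamma_0,f,v} = \Psi_{E_1,\ldots,E_k}\mathcal{A}_{\Gamma_0,f,v},$$
which is the desired relation.

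Two bookkeeping items warrant explicit care. First, one must check that $\Psi_{E_1,\ldots,E_k}$ really lies in $Sk_\mathcal{C}(\Sigma_n \times I, \mathfrak{p}\times I)$: each $\mathbf{E}(E_i)$ expands as a sum of finitely many framed links of homology class $m|E_i|$ for each $m \geq 0$, with $|E_i| \in \mathcal{C}$ by hypothesis and $E_i$ non-contractible so that $|E_i|\neq 0$; strict convexity of $\mathcal{C}$ then ensures the ordered product makes sense in the completion. Second, the symbols $\mathcal{A}_{\Gamma_i,f,v}$ must refer to unambiguous data, i.e.\ the face $f$ and vertex $v$ must persist through the sequence of mutations with their canonical identifications. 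This is exactly guaranteed by the hypothesis that $v$ is not adjacent to any $E_i$: each mutation modifies the graph only in a neighborhood of the edge being mutated, so $v$ and every face containing it are identified with their images in $\Gamma_1,\ldots,\Gamma_k$ in the obvious way.

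I do not anticipate any serious obstacle, since Theorem \ref{Tangle conjugation equation} already encodes all of the genuine skein-theoretic content. The only delicate point is purely notational, namely ensuring that the identifications of vertices and faces across the successive positive mutations (as specified in Figure \ref{Fig:Mutation}) are applied consistently so that the chain of equalities above is literally between the objects the notation refers to. Once the convention for positive mutation is fixed, the induction is entirely formal.
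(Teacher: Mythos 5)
The paper does not prove this corollary itself (it is imported from \cite[Corollary A.6.]{Scharitzer-Shende-2}), and your induction on $k$ via Theorem \ref{Tangle conjugation equation}, together with the observation that each positive mutation alters the graph only near the mutated edge so that $f$ and $v$ persist under the canonical identifications, is exactly the evident intended derivation and is correct. One cosmetic remark: non-contractibility alone does not force $|E_i|\neq 0$ (a separating curve is null-homologous); it is the combination with $E_i\in\mathcal{C}$ and the paper's convention that $\mathbb{E}(L)\in Sk_\mathcal{C}$ when $|L|\in\mathcal{C}\setminus\{0\}$ that places each factor, and hence the ordered product, in the completed module.
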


\section{Constructions}

\subsection{The square and canoe graphs}

\begin{definition}
    For each $n\geq 2$, we define two cubic graphs $\Gamma^1_n,\Gamma^2_n \subset [-1,n+1] \times [-n,n]$ by the following set of vertices and edges:

    \begin{eqnarray*}
        V(\Gamma^1_n)&=&\{(a,b):a=0,\dots,n;b=-n,n\} \\
        E(\Gamma^1_n)&=&\{((a_1,b_1),(a_2,b_2)):b_1=b_2;a_1=a_2+1\} \\
        &\sqcup& \{((a_1,b_1),(a_2,b_2)):b_1\neq b_2;a_1=a_2\} \\
        &\sqcup& \{(0,n),(0,-n)\} \\
        V(\Gamma^2_n)&=&\{(a,b):a=0,\dots,n;b=n\}\\ 
        &\sqcup& \{(i,-n+i):i=0,\dots n\} \\ 
        E(\Gamma^2_n)&=&\{((a_1,b_1),(a_2,b_2)):b_1=b_2=n;a_1=a_2+1\} \\
        &\sqcup& \{((i,-n+i),(i+1,-n+i+1)): i=0,\dots,n-1 \} \\
        &\sqcup& \{((i,n),(i+1,-n+i+1):i=0, \dots n-1)\}\\
        &\sqcup& \{((0,n),(0,-n)) , ((n,0),(n,-n))\}
        \end{eqnarray*}

        We call $\Gamma^1_n$ the square graph and $\Gamma^2_n$ the canoe graph.
\end{definition}

In Figure \ref{Fig:Quiver-Graphs}, we have indicated these graphs for the case $n=2$. In the same Figure, we have put labels on $\Gamma^1_2$ which indicate the homology class expanded in some fixed basis $L_1,M_1,L_2,M_2$ of the associated branched double cover $\Sigma_n$. For each $\Gamma^1_n$, we will fix analogous homology labels on the edges which equal the homology classes of the edges in $\Sigma_n$. Note that, when considering mutation sequences between these graphs, we will always identify the vertices $(i,-n)$ in $\Gamma^1_n$ and $(i,i-n)$ in $\Gamma^2_n$ with one another. We could have chosen notations in such a way that graph mutations fix the absolute position in our Figures, however it is convenient to let the absolute $y$-coordinate recover some additional information. We will never mutate along an edge which ends on a vertex on the $(\cdot,n)$ line. So throughout we will fix notation in such a way that graph mutation identifies all vertices along the $(i,Y)$ half-line where $Y\leq 1$. 

\begin{figure}
    \begin{picture}(550,125)
    \put(0,0){\includegraphics[width=18cm]{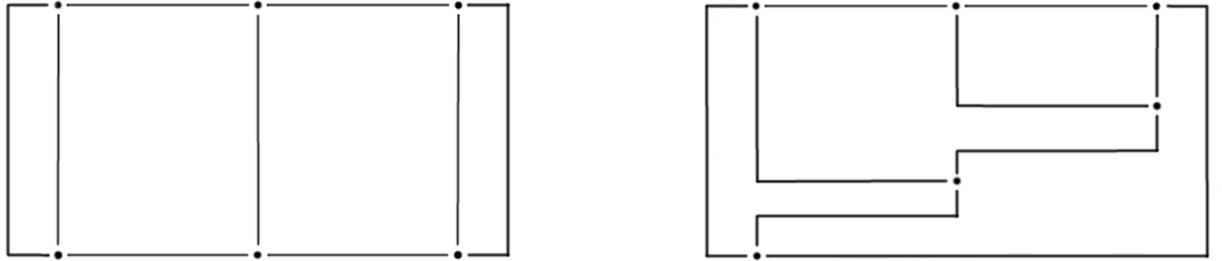}}
    \put(-22,50){$-M_1$}
    \put(30,50){$M_1$}
    \put(110,50){$M_2$}
    \put(135,50){$-M_1-M_2$}
    \put(215,50){$M_1+M_2$}
    \put(50,8){$L_1$}
    \put(120,8){$L_2$}
    \put(25,110){$-L_1-M_1-M_2 $}
    \put(120,110){$-L_2+M_1$}

    \end{picture}
    \caption{The graphs $\Gamma^1_n$ and $\Gamma^2_n$. The labels on the edges depict the homology classes of the lifts in a particular choice of basis of $H^1(\Lambda_{\Gamma^i_n})$}  
    \label{Fig:Quiver-Graphs}
\end{figure}

\subsection{A compatible filling of the square graph}

\begin{construction} \label{Construction of filling}
    Let $\Gamma^1_n$ be the square graph. Then there is a branched cover $\pi$ and a handle-body $\mathcal{H}_n$ with boundary $\Sigma_n$. This filling can be chosen to fulfill the following properties:

    \begin{enumerate}
        \item $H_1(\mathcal{H}_n)$ is generated by the inclusions of $L_i$;
        \item The inclusion $\Sigma_n \rightarrow \mathcal{H}_n$ contracts the edges with homology classes $M_i$;
    \end{enumerate}

    Furthermore, the branched double cover of $[-1,n+1]\times [1,-n-1]$ can be identified with $D_{n,tw}$ so that the oriented double covers of the edges labelled with $L_i$ in Figure \ref{Fig:Quiver-Graphs} are sent to the appropriate curves in Figure \ref{Fig:Twisted Disk}.
\end{construction}

\begin{proof}
    In Figures \ref{Fig:Pieces}, we have indicated how the double branched cover around the edges locally looks like. By gluing together, the local pieces in Figure \ref{Fig:Pieces}, we obtain the identification of the double branched cover of the square $[-1,n+1]\times [1,-n-1]$ with $D_{n,tw}$ which carries the oriented double covers of the edges labelled $L_i$ to the curves labelled $L_i$ in Figure \ref{Fig:Twisted Disk}. 
    
    If we choose the filling $\mathcal{H}_n$ which contracts the edges labelled with $M_i$ in Figure \ref{Fig:Quiver-Graphs} then we see that $\mathcal{H}_n$ deformation retracts onto the top-half identified with $D_{n,tw}$ whose homology class is generated by the $L_i$. So this filling fulfills all claimed properties.

    \begin{figure}
    \begin{picture}(200,300)
    \put(0,0){\includegraphics[width=8cm]{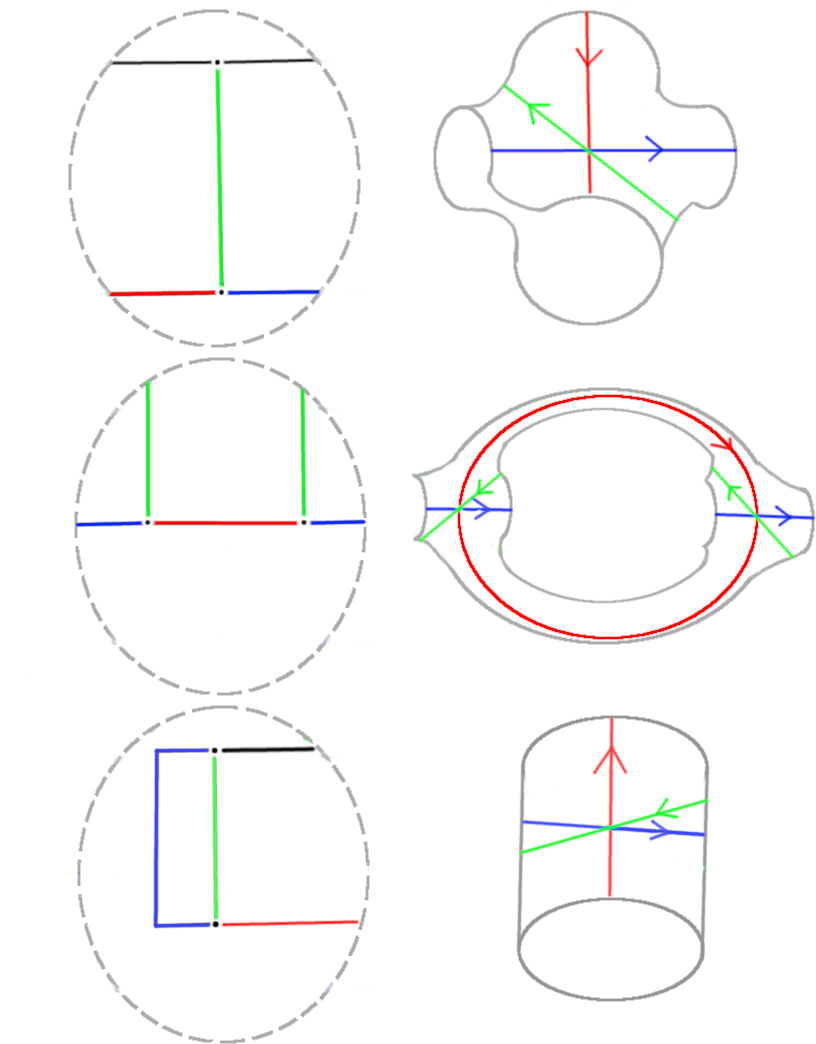}}

    \end{picture}
    \caption{The $3$ local pieces around horizontal edges, vertical edges and bigons. The left picture indicates the local structure of the graph while the right is the corresponding piece of the double branched cover. Colors indicate the oriented double lifts of certain edges. Note that if two edges are given the same colour then they are unrelated if they are not already connected in the left picture.}  
    \label{Fig:Pieces}
\end{figure}
\end{proof}

In the next section, we will construct two sequences of mutations which both start at the graph $\Gamma^1_n$ and end at $\Gamma^2_n$.

\begin{definition}
    Let $\Gamma_0$ be a cubic planar graph with branched lift $\pi$, $\mathcal{C} \subset H_1(\Sigma_g)$ a strictly convex cone. Furthermore, let $E_1,\dots,E_k$ and $E'_1,\dots,E'_l$ be two sequences of simple closed curves on $\Sigma_g$ such that each sequence fulfills the Conditions in Corollary \ref{Mutation sequence corollary}. Denote by $\Gamma$ and $\Gamma'$ the final graph in these sequences. We say that $E_1,\dots,E_k$ and $E'_1,\dots,E'_l$ are mutation equivalent if under the identification of vertices and faces, we have that $\Gamma$ and  $\Gamma'$ agree as graphs and the oriented double lift of their edges are homotopic to one another relative to the branching points.
\end{definition}

\begin{definition}
     Let $\Gamma_0$ be a cubic planar graph with branched lift $\pi$, $\mathcal{C} \subset H_1(\Sigma_g)$ a strictly convex cone. Furthermore, let $E_1,\dots,E_k$ be a sequence which fulfills the properties of Corollary \ref{Mutation sequence corollary}. We say that $E_1, \dots,E_k$ avoids a vertex $v \in \Gamma_0$ if under the appropriate identifications of vertices along the sequence none of the $E_i$ have $v$ as an endpoint. 
\end{definition}

\begin{proposition} \label{Psi's agree}
    Let $\Gamma^1_n$ be the square graph, $\mathcal{H}_n$ the filling from Construction \ref{Construction of filling} and $\mathfrak{p}$ be a choice of points as in Definition \ref{Face relation-definition}. Let $E_1,\dots,E_k$ and $E'_1,\dots,E'_l$ be two sequences of oriented curves on $\Sigma_n$ which are mutation equivalent. Furthermore, assume that both sequences avoid the vertices of $\Gamma^1_n$ which are on the line $(\cdot,n)$. Then there is a choice of oriented $2$-chain which bounds $\mathfrak{p} \times I$ in $\Sigma_n \times I$ which is disjoint from the links in the products $\Psi_{E_1,\dots,E_k}$ and $\Psi_{E'_1,\dots,E'_l}$, as defined in Corollary \ref{Mutation sequence corollary}. Furthermore, the images of $\Psi_{E_1,\dots,E_l}$ and $\Psi_{E'_1,\dots,E'_l}$ under the boundary inclusion $Sk_\mathcal{C}(\Sigma_n \times I) \rightarrow Sk_\mathcal{C}(\mathcal{H}_n)$ agree.
\end{proposition}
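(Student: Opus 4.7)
The plan is to handle the two claims separately. First I would exhibit the $2$-chain; this is an essentially geometric construction exploiting the fact that the top row of vertices is untouched. Then for the agreement of the two products after boundary inclusion, I would exploit the face-relation conjugation from Corollary~\ref{Mutation sequence corollary} together with the sliding operator lemma (Lemma~\ref{Sliding operator lemma}) and the specific properties of the filling in Construction~\ref{Construction of filling}.

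For the $2$-chain, I would proceed as follows. Recall that $\mathfrak{p}$ is the set of signed preimages under $\pi$ of one chosen point $p_f$ per face of $\Gamma^1_n$. Since by hypothesis both sequences avoid every vertex on the top row $(\cdot,n)$, I may place each marked point $p_f$ close to that row, and for each $f$ pick a path inside $f$ from $p_f$ to some vertex on $(\cdot,n)$. Its two signed lifts to $\Sigma_n$, glued at the branch point, give an oriented $1$-chain bounding $\mathfrak{p}$; taking the product with $I$ yields a $2$-chain $D$ with $\partial D = \mathfrak{p}\times I$. Since every $E_i$ and $E'_j$ can be isotoped to avoid a neighborhood of the top-row vertices (and hence of the portions of the faces adjacent to them), one may contract $D$ slightly so that it is disjoint from all knots in both $\Psi_{E_1,\dots,E_k}$ and $\Psi_{E'_1,\dots,E'_l}$.

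With this $D$ in hand, the untwisting map $u_D$ sends both products to honest elements of $Sk_\mathcal{C}(\Sigma_n\times I)$, which then include into $Sk_\mathcal{C}(\mathcal{H}_n)$; denote these images $\tilde\Psi$ and $\tilde\Psi'$. To show $\tilde\Psi = \tilde\Psi'$, I would use that both satisfy $\mathcal{A}_{\Gamma_{\mathrm{end}},f,v}\tilde\Psi = \tilde\Psi\,\mathcal{A}_{\Gamma^1_n,f,v}$ (and likewise for $\tilde\Psi'$) by Corollary~\ref{Mutation sequence corollary}, for the faces $f$ of $\Gamma^1_n$ whose vertices $v$ are avoided by all $E_i$ and $E'_j$, in particular those incident to the top row. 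By Construction~\ref{Construction of filling}, the $M_i$-edges become contractible in $\mathcal{H}_n$, so the curves $c$ implicitly appearing in the face relations $\mathcal{A}_{\Gamma^1_n,f,v}$ bound disks in $\mathcal{H}_n$; this puts us in the situation of Lemma~\ref{Sliding operator lemma}. Filtering by the $H_1(\mathcal{H}_n)$-grading and using that both series start with the identity, I would argue inductively grade-by-grade that the difference $\tilde\Psi - \tilde\Psi'$ vanishes: in each grade, the common conjugation relation together with the already-established agreement in lower grades forces the grade-$h$ piece of the difference to be annihilated by the sliding operators associated to each face of $\Gamma^1_n$, hence supported on links disjoint from the corresponding disks by Lemma~\ref{Sliding operator lemma}, and the cone restriction $\mathcal{C}$ then eliminates any remaining possibility.

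The main obstacle I expect is the last, purely skein-theoretic, step: verifying rigorously that the combined conjugation relations cut down the grade-$h$ piece to zero. The subtlety is that Lemma~\ref{Sliding operator lemma} only constrains the \emph{support} of an element (on links avoiding a disk), not that the element vanishes outright, so one must check that imposing one such condition for every face of $\Gamma^1_n$ — together with the homology grading landing in the cone $\mathcal{C}$ and the action matching on disk-disjoint links — leaves no room for a nontrivial difference. I expect this bookkeeping to mirror the "rigidity" arguments in \cite{Scharitzer-Shende-2,HSZ}, which identify these intersection constraints as precisely characterizing the image of the boundary inclusion from an appropriate disk-avoiding submodule.
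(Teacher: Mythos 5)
Your construction of the $2$-chain is essentially the paper's: place the marked points and the bounding paths away from the subsquare $[-1,n+1]\times[-n-1,1]$ where all mutations happen, so their double lifts times $I$ miss every $\mathbb{E}(E_i)$ and $\mathbb{E}(E'_j)$. That part is fine. The gap is in the second half, and it is not just the final "rigidity" step you flag: the intermediate mechanism you propose does not produce inputs for Lemma~\ref{Sliding operator lemma}. You work with the difference $\tilde\Psi-\tilde\Psi'$, which only satisfies the intertwining $\mathcal{A}_{\Gamma_{\mathrm{end}},f,v,D}\,\Delta=\Delta\,\mathcal{A}_{\Gamma^1_n,f,v,D}$ with two \emph{different} operators, whereas the sliding lemma needs an honest relation $([c]-[\bigcirc])\Psi=0$. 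The paper instead forms the invertible ratio $\Psi:=\Psi_{E_1,\dots,E_k}^{-1}\Psi_{E'_1,\dots,E'_l}$ (both are power series with leading term the empty link), for which mutation equivalence gives a genuine commutation $\mathcal{A}_{\Gamma^1_n,f,v,D}\Psi=\Psi\,\mathcal{A}_{\Gamma^1_n,f,v,D}$. Moreover, your plan to apply "the sliding operators associated to each face" independently, grade by grade, breaks down for two reasons: the square-face operators are not sliding operators — they contain the extra terms $[l_{v_1}],[l_{v_2}]$ of homology class $-L_k$, which must first be separated from the $[-M_{k+1}]$ term by a homology-splitting argument, and that splitting is only available once one already knows $\Psi$ is disjoint from $D_1,\dots,D_k$ (so its support has vanishing $L_1,\dots,L_k$-coefficients); hence the induction must run sequentially over the bigon face and the square faces, not independently over faces. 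In addition, because the face operators have components of negative homology degree, the grade-$h$ component of the intertwining relation involves grades of $\Delta$ \emph{higher} than $h$, so your proposed grade-by-grade induction on the difference does not close.

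Finally, the concluding step you leave open is supplied in the paper and is not a cone argument: once the sequential induction shows the ratio $\Psi$ lies in $Sk_{\mathcal{C}}(\mathcal{H}_n;D_1,\dots,D_n)$, i.e.\ is supported on links disjoint from all $n$ disks bounding the $M_i$, its support lies in the complement $\mathcal{H}_n\setminus(\bigcup D_i)$, which deformation-retracts to a ball; the skein module there is free of rank one on the empty link, so $\Psi$ equals its leading coefficient, namely $1$, giving $\Psi_{E_1,\dots,E_k}=\Psi_{E'_1,\dots,E'_l}$. (In your difference formulation the analogous ending would be that $\Delta$ is a multiple of the empty link with vanishing constant term, hence zero, but you would first need the disjointness from all disks, which your scheme does not yet establish.) So the missing ideas are concretely: the ratio trick to obtain true commutation relations, the sequential disjointness induction with homology splitting of the square-face relations, and the one-dimensionality of the skein of the disk-complement as the closing argument.
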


\begin{proof}
 Since, we assume that the mutation sequences avoid the vertices on the line $(\cdot,n)$. We may assume that all graphs agree outside the square $[-1,n+1] \times [-n-1,1]$. We may furthermore, assume that all points of $\mathfrak{p}$ are chosen outside this square. We can choose an oriented $2$-chain $D$ as the oriented double lift of a path from the point $p_f \in f$ to one of the vertices on the boundary which avoids the subsquare and thus all edges $E_1,\dots,E_k,E'_1,\dots,E'_l$ and so also avoids all $\mathbb{E}(E_i)$ and all $\mathbb{E}(E'_l)$. Now, fix attention on the left-most bigonal face $f_b$ and the first $n-1$ square faces $f_{1},\dots,f_{n-1}$ of $\Gamma^1_n$. Fix the top-right vertices $v_b,v_1,\dots,v_{n-1}$ of the faces $f_b,f_1,\dots,f_{n-1}$. Then from Corollary \ref{Mutation sequence corollary}, we have the following two sets of equations:

 \begin{eqnarray*}
    \mathcal{A}_{\Gamma_k,f_b,v_b,D}\Psi_{E_1,\dots,E_k} &=& \Psi_{E_1,\dots,E_k}\mathcal{A}_{\Gamma^1_n,f_b,v_b,D} \\
    \mathcal{A}_{\Gamma_l',f_b,v_b,D}\Psi_{E'_1,\dots,E'_l} &=& \Psi_{E'_1,\dots,E'_k}\mathcal{A}_{\Gamma^1_n,f_b,v_b,D}
 \end{eqnarray*}

 for the bigonal face and:

  \begin{eqnarray*}
     \mathcal{A}_{\Gamma_k,f_i,v_i,D}\Psi_{E_1,\dots,E_k} &=& \Psi_{E_1,\dots,E_k}\mathcal{A}_{\Gamma^1_n,f_i,v_i,D} \\
     \mathcal{A}_{\Gamma_l',f_i,v_i,D}\Psi_{E'_1,\dots,E'_l} &=& \Psi_{E'_1,\dots,E'_k}\mathcal{A}_{\Gamma^1_n,f_i,v_i,D} 
 \end{eqnarray*}

for each of the first $n-1$ square faces of $\Gamma^1_n$. The additional $D$ subscript indicates the image under the $D$-untwisting map, see Section \ref{twisted skein module section}. By assumption, the sequences $E_1,\dots,E_k$ and $E'_1,\dots,E'_l$ are mutation equivalent and so the homotopy classes of all edges agree. As $\mathcal{A}_{\Gamma,f,v}$ depends only on the homotopy classes of the edges adjacent to the face, we also have that $\mathcal{A}_{\Gamma_k,f_b,v_b,D}=\mathcal{A}_{\Gamma_l',f_b,v_b,D}$ and $\mathcal{A}_{\Gamma_k,f_i,v_i,D}=\mathcal{A}_{\Gamma_l',f_i,v_i,D}$. So, in particular these equations imply:

\begin{eqnarray} \label{Bigonal face relation}
    \mathcal{A}_{\Gamma^1_n,f_b,v_b,D} \Psi^{-1}_{E_1,\dots,E_k}\Psi_{E'_1,\dots,E'_l}&=& \Psi^{-1}_{E_1,\dots,E_k}\Psi_{E'_1,\dots,E'_l}\mathcal{A}_{\Gamma^1_n,f_b,v_b,D} \\
    \mathcal{A}_{\Gamma^1_n,f_i,v_i,D} \Psi^{-1}_{E_1,\dots,E_k}\Psi_{E'_1,\dots,E'_l}&=& \Psi^{-1}_{E_1,\dots,E_k}\Psi_{E'_1,\dots,E'_l}\mathcal{A}_{\Gamma^1_n,f_i,v_i,D}
\end{eqnarray}

We claim, that these equations imply that this implies the image of $\Psi:=\Psi^{-1}_{E_1,\dots,E_k}\Psi_{E'_1,\dots,E'_l}$ under the boundary inclusion $Sk_\mathcal{C}(\Sigma_n \times I) \rightarrow Sk_\mathcal{C}(\mathcal{H}_n)$ lives inside $Sk_\mathcal{C}(\mathcal{H}_g;D_1,\dots,D_n)$ where $D_1,\dots,D_n$ are disjoint disks which bound the edges labelled by $M_1,\dots,M_n$ in Figure \ref{Fig:Quiver-Graphs}. The complement $\mathcal{H}_n \setminus (\bigcup D_i)$ deforms to a sphere which, in combination with the fact that $Sk(S^3)$ is one-dimensional over the base ring, implies that $\Psi$ agrees with its leading coefficient which is $1$. Thus $\Psi_{E_1,\dots,E_k}=\Psi_{E'_1,\dots,E'_l}$ as claimed.

So, we will start to prove the claim by inductively showing that $\Psi \in Sk_\mathcal{C}(\mathcal{H}_g;D_1,\dots,D_k)$ for all $1 \leq k \leq n$. We start by writing Equation \ref{Bigonal face relation}, explicitly: 

\begin{eqnarray*}
    (a^{-1}[\bigcirc] + (-a)^{\gamma_D} [-M_1])\Psi = \Psi (a^{-1}[\bigcirc] + (-a)^{\gamma_D} [-M_1])
\end{eqnarray*}

where the exponent ${\gamma_D}$, depends on the choice of $D$. We can immediately simplify this as:

\begin{eqnarray*}
    (-a)^{\gamma_D} [-M_1]\Psi = \Psi (-a)^{\gamma_D} [-M_1]
\end{eqnarray*}

and consider its image under the boundary inclusion:

\begin{eqnarray*}
    (-a)^{\gamma_D} [-M_1]\Psi = \Psi (-a)^{\gamma_D} [\bigcirc]
\end{eqnarray*}

where we used that $M_1$ becomes contractible in our choice of filling. Now, we may apply Lemma \ref{Sliding operator lemma} and infer $\Psi \in Sk_{\mathcal{C}}(\mathcal{H}_g;D_1)$ where $D_1$ is a disk filling $M_1$.

So inductively, assume that $\Psi \in Sk_{\mathcal{C}}(\mathcal{H}_g;D_1,\dots,D_k)$ where $D_i$ is a disk filling the edge labelled by $M_i$ in Figure \ref{Fig:Quiver-Graphs}. Then we consider the $k$-th square face equation which we explicitly write out as:

\begin{eqnarray*}
    (a^{-1}[\bigcirc] + (-a)^{\gamma_{1,D}} [l_{v_1}] + (-a)^{\gamma_{2,D}} [l_{v_2}] + (-a)^{\gamma_{D}} [-M_{k+1}])\Psi &= \\ \Psi (a^{-1}[\bigcirc] + (-a)^{\gamma_{1,D}} [l_{v_1}] + (-a)^{\gamma_{2,D}} [l_{v_2}] + (-a)^{\gamma_{D}} [-M_{k+1}]) &
\end{eqnarray*}

where we used the notation of Definition \ref{Face relation-definition}. We observe the following two facts: 

\begin{enumerate}
    \item The homology classes of the inclusions of these elements is $0$ for $[M_{k+1}]$ and $-L_k$ for $[l_{v_1}]$ and $[l_{v_2}]$.
    \item If $\Psi \in Sk_{\mathcal{C}}(\mathcal{H}_g;D_1,\dots,D_k)$ then $\Psi \in Sk_{\widetilde{\mathcal{C}}}(\mathcal{H}_g;D_1,\dots,D_k)$, where 
    
    $\widetilde{\mathcal{C}} := \mathcal{C}\cap \bigcap \{L_i = 0\}$.
\end{enumerate}
The first follows immediately from the choice of basis as written in Figure \ref{Fig:Quiver-Graphs}. The second follows from the fact that $D_1,D_2+D_1,\dots,D_n+\dots+D_1$ define a basis of $H^2(\mathcal{H}_n,\Sigma_n)$ dual to the basis $L_1,\dots,L_n$ of $H_1(\mathcal{H}_n)$. Since all knots are disjoint from the disks $D_1,\dots,D_k$ their oriented intersection number with them is $0$ and thus their $L_1,\dots,L_k$-coefficients are $0$. 

Using these two observations, we may immediately split the equation into two equations one in $Sk_{\widetilde{\mathcal{C}}}(\mathcal{H}_n)$ and one in $Sk_{\widetilde{\mathcal{C}},-|L_k|}(\mathcal{H}_n)$:

\begin{eqnarray*}
     (-a)^{\gamma_{D}} [-M_{k+1}]\Psi &=& \Psi (-a)^{\gamma_{D}} [\bigcirc] \\
      ((-a)^{\gamma_{1,D}} [l_{v_1}] + (-a)^{\gamma_{2,D}} [l_{v_2}])\Psi &=& \Psi((-a)^{\gamma_{1,D}} [l_{v_1}] + (-a)^{\gamma_{2,D}} [l_{v_2}])
\end{eqnarray*}

The first equation allows us to immediately apply Lemma \ref{Sliding operator lemma} and so we conclude $\Psi \in Sk_{\mathcal{C}}(\mathcal{H}_g;D_1,\dots,D_{k+1})$

\end{proof}
     
\subsection{The mutation sequences} \label{Mutation sequences section}

In this section, we will describe the sequence of mutations which will give rise to the left-hand and right-hand sides of Theorem \ref{Main theorem}. We follow the structure, that we will first define the sequence of graphs and then how they are inductively related by positive graph mutations. We only, have to reconstruct one specific sequence for the left-hand side while the right-hand side depends on some initial choices, see Lemma \ref{Quiver-equivalent formulation of sequence}. 

Before we start the discussion, we will fix some notation: In this section, we will only modify the graph $\Gamma^1_n$ in the subsquare $[-1,n+1] \times [-n-1,1]$. There are $n+1$ vertices in this square which lie on the half-lines $\{i\} \times [-n,0]$. Any graph and any mutation below will allow us to identify the vertices occurring along these lines with one another. So, independent of the exact positioning along these lines we will always denote them by $v_i$ for $0 \leq i \leq n$. Similarly, we will always have at most one edge between $v_i$ and $v_j$, we will always denote this edge by $E_{[i+1,j]}$. Similarly, to the notation of $L_{[i+1,j]}$ from Subsection \ref{Subsection:Statement of the main result} this should indicate that $E_{[i+1,j]}$ is an appropriate succession of Dehn twists. As it turns out, this will be true up to a possible change of orientation. However, a priori these two notations are disjoint from one another. In addition, in our graphical calculus, we will indicate that $E_{[i+1,j]}=L_{[i+1,j]}$ if both vertices are at the same height. Otherwise, they agree after reversing orientation.

Lastly, there are $n+2$-many edges $H_{-1},\dots,H_{n+1}$ ordered from left to right. Their intersection with the boundary of the subsquare will always be fixed. This induces an identification of these half-edges in any graphs, we define below.

In the following proofs we will also need to keep track of homotopy classes of different edges. So, it will be convienent to have the following lemma at hand:

\begin{lemma}
    Let $\Sigma_n$ be a genus $n$ surface and $\gamma_1,\dots,\gamma_k$ be a system of simple closed curves such that $\gamma_i \cap \gamma_{j}$ consists of a single point if $|i-j|<2$ and is empty otherwise. Then the set of iterated Dehn-twists $\gamma_{[i,j]}$ fulfills the following properties:

    \begin{eqnarray} \label{Dehn twist identity-left addition}
        \tau_{\gamma_1} (\gamma_{[2,k]}) &=& \gamma_{[1,k]} \\
        \label{Dehn twist identity-right addition}
        \tau_{\gamma_k} (\gamma_{[1,k-1]}) &=& \gamma_{[1,k]} \\ \label{Dehn twist identity- right substraction}
        \tau_{-\gamma_{[j,k]}}(\gamma_{[1,k]}) &=& \gamma_{[1,j-1]} \\ \label{Dehn twist identity- right overspill}
        \tau_{-\gamma_{[1,k]}}(\gamma_{[1,j]}) &=& -\gamma_{[j+1,k]} \\\label{Dehn twist identity- left overspill}
        \tau_{-\gamma_{[2,k]}}(\gamma_{[1,k]}) &=& \gamma_{1}
    \end{eqnarray}
\end{lemma}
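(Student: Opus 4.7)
The plan is to first fix a concrete inductive definition of $\gamma_{[i,j]}$ directly via \eqref{Dehn twist identity-right addition}: declare $\gamma_{[i,i]} := \gamma_i$ and set $\gamma_{[i,j]} := \tau_{\gamma_j}(\gamma_{[i,j-1]})$ for $i<j$, so that \eqref{Dehn twist identity-right addition} holds by construction. The remaining four identities are then proved by combining the naturality identity $\tau_{\phi(a)} = \phi\,\tau_a\,\phi^{-1}$ with the fact that Dehn twists about disjoint simple closed curves commute. Before doing anything else I would verify, by induction on $j-i$, that $\gamma_{[i,j]}$ is a simple closed curve meeting $\gamma_\ell$ transversely in one point when $\ell \in \{i-1,j+1\}$ and disjoint from all other $\gamma_\ell$; this is immediate from the support of $\tau_{\gamma_j}$ being a tubular neighborhood of $\gamma_j$, which only affects intersections with curves meeting $\gamma_j$.

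For \eqref{Dehn twist identity-left addition} I would induct on $k$. The base case $k=1$ is trivial, and the inductive step reads
\[
\tau_{\gamma_1}(\gamma_{[2,k]}) \;=\; \tau_{\gamma_1}\tau_{\gamma_k}(\gamma_{[2,k-1]}) \;=\; \tau_{\gamma_k}\tau_{\gamma_1}(\gamma_{[2,k-1]}) \;=\; \tau_{\gamma_k}(\gamma_{[1,k-1]}) \;=\; \gamma_{[1,k]},
\]
where the middle equality uses $\gamma_1 \cap \gamma_k = \emptyset$ for $k \geq 3$, the third equality uses the inductive hypothesis, and the last is the definition.

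For the three remaining identities I would first establish, by induction on $k-j$, the ``build-up from the left'' identity $\gamma_{[1,k]} = \tau_{\gamma_{[j,k]}}(\gamma_{[1,j-1]})$ using naturality to move $\tau_{\gamma_{[j,k]}}$ through the iterated Dehn twist expression for $\gamma_{[1,k]}$ and applying \eqref{Dehn twist identity-left addition}. Identity \eqref{Dehn twist identity- right substraction} then follows by applying the inverse twist $\tau_{-\gamma_{[j,k]}}$ to both sides. The analogous ``build-up'' identity $\gamma_{[1,k]} = \tau_{\gamma_{[j+1,k]}}(\gamma_{[1,j]})$ yields \eqref{Dehn twist identity- right overspill} after again applying the inverse twist, with the minus sign on the right-hand side arising because the inverse twist carries the overspilled portion $\gamma_{[j+1,k]}$ against its induced orientation. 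Finally, \eqref{Dehn twist identity- left overspill} is the extremal case $j=1$ of (a mirror variant of) the same argument, where the only sub-chain left uncovered by the twisting curve is $\gamma_1$ itself.

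The main obstacle I anticipate is the orientation bookkeeping: the minus signs appearing on curves in the subscripts of $\tau$ (denoting inverse twists) and on the right-hand sides of \eqref{Dehn twist identity- right overspill} and \eqref{Dehn twist identity- left overspill} encode the fact that the inverse Dehn twist reverses the induced orientation on the overspilled sub-chain. Carrying these signs consistently through the induction, so that the oriented isotopy classes match on the nose rather than only up to unoriented isotopy, will require fixing the orientation convention on the chain $\gamma_1,\dots,\gamma_k$ at the outset and checking compatibility at each inductive step; this is the only place in the argument where care beyond routine naturality is needed.
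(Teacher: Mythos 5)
Your overall scheme---define $\gamma_{[i,j]}$ by right addition so that \eqref{Dehn twist identity-right addition} is the definition, then use naturality $\tau_{\phi(a)}=\phi\tau_a\phi^{-1}$ together with commutation of twists about disjoint curves to get a ``build-up'' identity and deduce \eqref{Dehn twist identity- right substraction} and \eqref{Dehn twist identity- left overspill} by applying the inverse twist---is close in spirit to the paper's argument, which identifies the twist about the composite chain curve with the iterated twists and then checks \eqref{Dehn twist identity- right substraction} by an explicit picture, asserting the rest follow similarly. For those three identities your formal route works (the build-up identity needs only the definition, disjointness, and your preliminary intersection bookkeeping). However, two of the remaining steps have genuine gaps.

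First, your induction for \eqref{Dehn twist identity-left addition} does not bottom out at $k=1$ but at $k=2$, where the claim is $\tau_{\gamma_1}(\gamma_2)=\tau_{\gamma_2}(\gamma_1)$ with $\gamma_1,\gamma_2$ meeting in a single point; here no commutation or naturality is available, and for the classical Dehn twist with one fixed handedness this identity already fails on homology (one side represents a ``sum'' class $\gamma_1+\gamma_2$ and the other a ``difference'' class, whichever way the intersection sign is chosen), so it can only hold under the orientation/handedness conventions implicit in the paper's mutation figures and must be verified by a local picture---precisely the kind of check the paper performs and your formal induction omits by declaring the base case trivial. Second, your derivation of \eqref{Dehn twist identity- right overspill} is a non sequitur: applying $\tau_{-\gamma_{[j+1,k]}}$ to the build-up identity $\gamma_{[1,k]}=\tau_{\gamma_{[j+1,k]}}(\gamma_{[1,j]})$ reproduces \eqref{Dehn twist identity- right substraction} with $j$ shifted, whereas \eqref{Dehn twist identity- right overspill} twists about the \emph{full} chain curve $\gamma_{[1,k]}$. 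What is actually needed is the two-curve fact that if $a,b$ are simple closed curves meeting once and $c=\tau_b(a)$, then $\tau_{-c}(a)=-b$ (applied with $a=\gamma_{[1,j]}$, $b=\gamma_{[j+1,k]}$); this is a local-model computation in a neighborhood of $a\cup b$, not a formal consequence of the build-up identity, and the phrase about the inverse twist ``carrying the overspilled portion against its orientation'' does not supply it. To complete your proof you must add these two local verifications, with the orientation and handedness conventions fixed as in the paper's figures; with them in place the rest of your argument goes through.
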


\begin{proof}
    We will only verify Equation \ref{Dehn twist identity- right substraction}, the others follow similarly. Observe that if $\gamma_1,\dots,\gamma_k$ is a system of curves as above then $\gamma_{[1,j]}, \gamma_{[j+1,k]}$ can be homotoped in such a way that they only have one transverse intersection point as well. Then, since all smoothing operations are entirely local, it does not matter whether we Dehn twist $\gamma_{[1,j]}$ iteratively along $\gamma_{j+1},\dots,\gamma_{k}$ or Dehn twist $\gamma_{[1,j]}$ along $\gamma_{[j+1,k]}$. Finally, in Figure \ref{Fig:Dehn-twist-identity}, we verify that $\tau_{-\gamma_{[j,k]}}(\gamma_{[1,k]})=\tau_{-\gamma_{[j,k]}}(\tau_{\gamma_{[j,k]}}(\gamma_{[1,k]}))$ agrees with $\gamma_{[1,j-1]}$.

    \begin{figure}
    \begin{picture}(250,100)
    \put(0,0){\includegraphics[width=10cm]{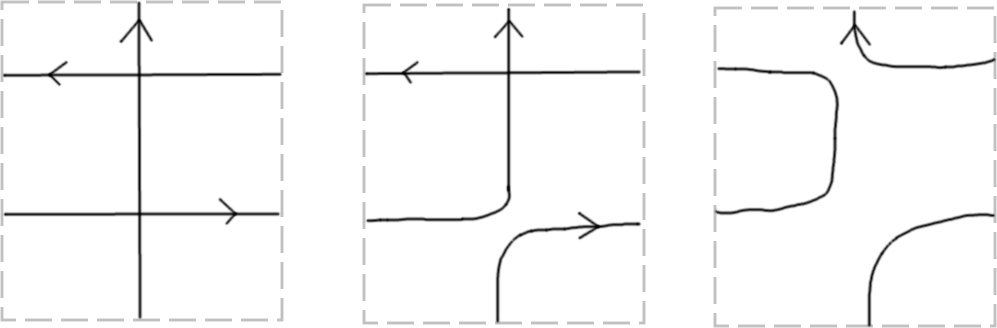}}
    \end{picture}
    \caption{On the right, three curves $\gamma'_1,\gamma_2'$ and $-\gamma'_2$ such that $\gamma_1'$ and $\gamma'_2$ have a single intersection point. In these pictures, we consider the left and the right borders to be glued to one another so that this picture contains the whole curves $\gamma'_2,-\gamma'_2$. The middle picture depicts the Dehn twist of $\gamma'_1$ along $\gamma'_2$. The right picture depicts the iterated Dehn twist of $\gamma'_1$ first along $\gamma'_2$ and then along $-\gamma'_2$. It is evident that the curve in the right picture is homotopic through simple closed curves to $\gamma'_1$.}  
    \label{Fig:Dehn-twist-identity}
\end{figure}
\end{proof}

\subsubsection{The short sequence}
 
\begin{definition} \label{Short sequence definition}
    Let $\Gamma^1_n$ be the square graph. Define the graph $\Gamma_i$, for $1 \leq i \leq n+1$, by modifying $\Gamma^1_n$ in the square $[i-1,n+1] \times [-n,1]$ by:

    \begin{enumerate}
        \item The vertex $v_{i-1}$ is in position $(i-1,-n)$ while $v_j$ for $i\leq j \leq n$ is shifted to $(j,-n+j)$. 
        \item The half-edge $H_{j}$ is attached to the vertex $v_{j+1}$ for $i\leq j \leq n-1$, $H_n$ is attached to $v_n$ and $H_{n+1}$ is attached to $v_{i-1}$.
        \item The homotopy class of the half-edge $H_j$ in $\Gamma_i$ differs from the one in $\Gamma^1_n$ by a Dehn twist along $E_{j+1}$ for $i-1 \leq j \leq n$. The homotopy class of the half-edge $H_{n+1}$ is changed by a Dehn-twist along $E_{[i,n]}$.  
        \item For each $1 \leq j \leq n$, there is an edge labelled by $E_j$.
        \item The edges $E_j$ are homotopic to $-L_{j}$ for $i \leq j \leq n$.
    \end{enumerate}

    For $i<(n+1)$, the graph $\Gamma_i$ inside the square $[i-1,n+1] \times [-n,1]$ is depicted on the top right of Figure \ref{Fig:Quiver-Calculation-1}.
\end{definition}

\begin{construction} \label{Short sequence construction}
    A positive graph mutation along $E_{i}$ in the graph $\Gamma_{i+1}$ induces the graph $\Gamma_i$.
\end{construction}

\begin{proof}
    This is verified in Figure \ref{Fig:Quiver-Calculation-1}. On the left is the special case of the transition $\Gamma_{n+1}=\Gamma^1_n$ (top left) to $\Gamma_n$ (bottom left). On the right we have indicated the general case for $1<i<n$. The other special case $i=1$, is similar to the case $1<i<n+1$ except that the left-most two edges are joint to a half-edge.

    \begin{figure}
    \begin{picture}(350,300)
    \put(0,0){\includegraphics[width=12.5cm]{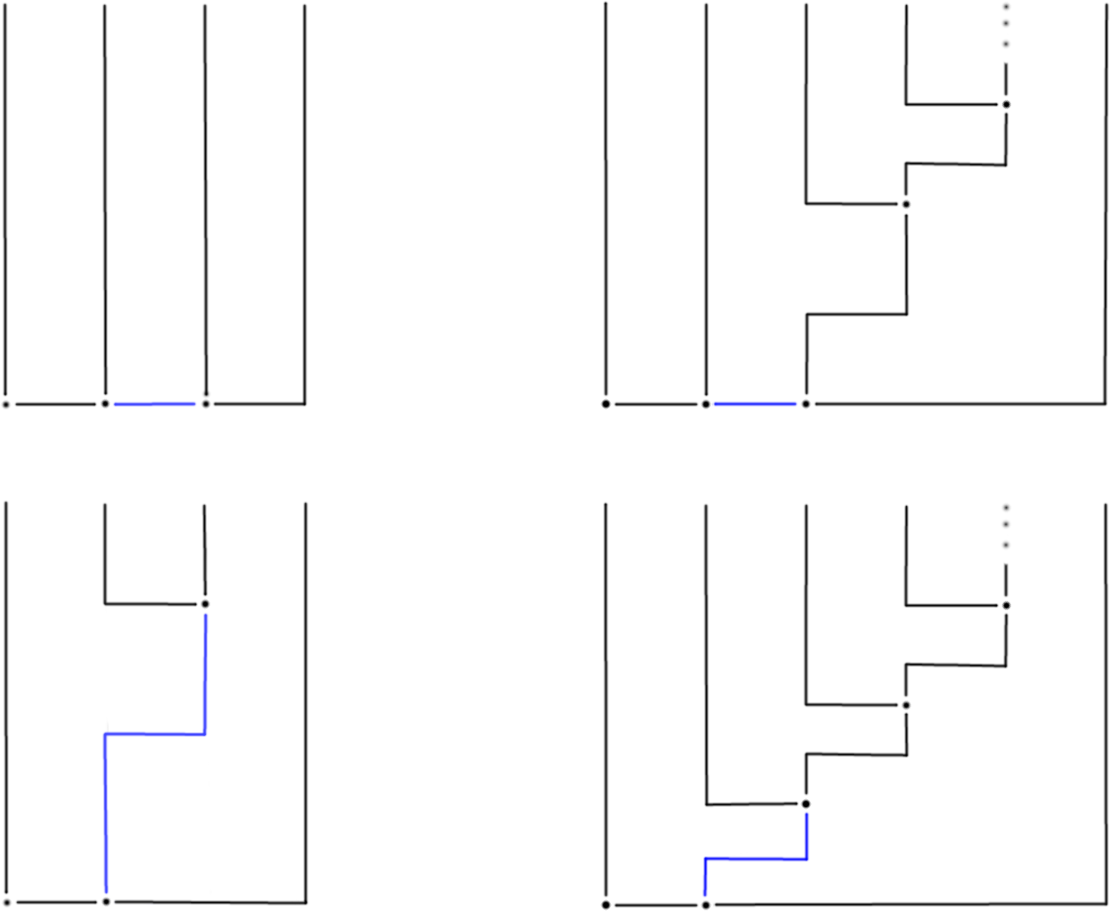}}
    \put(40,153){$L_n$}
    
    \put(40,105){$\tau_{L_n}$}
    \put(40,40){$-L_n$}
    \put(100,40){$\tau_{L_n}$}
    
    \put(230,153){$L_j$}
    \put(300,155){$\tau_{L_{[j+1,n]}}$}

    \put(205,102){$\tau_{L_j}$}
    \put(230,25){$-L_j$}
    \put(300,-5){$\tau_{L_{[j,n]}}$}

    \end{picture}
    \caption{Two graph transitions, the initial graphs are at the top. The edge along we mutate is highlighted in blue. For the homotopy label transition on the right we have used the Dehn-twist identity \ref{Dehn twist identity-left addition}.}  
    \label{Fig:Quiver-Calculation-1}
\end{figure}

\end{proof}

\subsubsection{The long sequence}

\begin{definition} \label{Long sequence graph definition}
    Let $A=(i_1,\dots,i_n)$ be an $n$-tuple of numbers such that: 
    \begin{enumerate}
        \item $i_j \leq j$ for $1 \leq j \leq n$;
        \item Let $M$ be the maximum such that the first $M$-entries of $A$ agree with $(1,\dots,M)$. Then after deleting the first $M-1$ entries of $A$, it is a non-increasing sequence.
    \end{enumerate}
    Furthermore, for each $l\leq M$ denote by $\mu_{l,min}$ the minimal $j$ with $M\leq j$ such that $i_{\mu_{l,min}} = l$. Define $\mu_{l,max}$ similarly but $\mu_{l,max}$ need not be larger than $M$.
    
    Then, we define $\Gamma_A$ to be the graph which agrees with $\Gamma^1_n$ outside of $[-1,n+1] \times [-n,1]$ and inside:

    \begin{enumerate}
        \item The vertex $v_0$ is in position $(0,-n)$, the vertices $v_k$ are in position $(k,-n+i_k)$.
        \item The half-edges $H_k$ for $-1 \leq k<M$ are connected to $v_{k+1}$. The half-edge $H_M$ is connected to $v_M$. The half-edges $H_k$ for $k>M$ are connected to $v_k$. If $i_n = 0$ then $H_{n+1}$ is connected to $v_n$ and otherwise $H_{n+1}$ is connected to $v_0$.
        \item Under the identification of vertices and faces of $\Gamma_A$ and $\Gamma^1_n$, the homotopy class of $H_{k}$ for $0 \leq k \leq M-1$ is changed by $E_{k+1}$ from the one in $\Gamma^1_n$. The homotopy classes of the remaining half-edges agree with the ones on $\Gamma^1_n$. 
        \item For each $1\leq l \leq M$ let $v_{l-1},v_{\mu_{l,min}}, \dots v_{\mu_{l,max}}$ be the vertices at height $l-1$. Then there are edges $E_{[i+1,j]}$ between vertices $v_i,v_j$ which are neighbors on this list. If this list contains only $v_{l-1}$ then there are no such edges.
        \item For each $0 \leq l < M$ there is an edge $E_{[l+1,\mu_{l+1,max}]}$ between $v_{l}$ and $v_{\mu_{(l+1),max}}$.
        \item If the edge $E_{[i,j]}$ exists then under the identification between $\Gamma^1_n$ and $\Gamma_n$, the homotopy class of $E_{[i,j]}$ and $L_{[i,j]}$ agree if both vertices are on the same horizontal line $[-1,n+1] \times \{y\}$. Otherwise, its homotopy class is $-L_{[i,j]}$.
    \end{enumerate}
\end{definition}

\begin{construction} \label{Long sequence construction}
    Let $A=(i_1,\dots,i_n)$ be an $n$-tuple fulfilling the properties listed in Definition \ref{Long sequence graph definition} and $\Gamma_A$ the associated graph. Furthermore, let $E_{[i_j,j]} \in \Gamma_A$ be an edge such that $E_{[i_j,j]}=L_{[i_j,j]}$ under the appropriate identification of $\Gamma^1_n$ and $\Gamma_A$. Then positive mutation of $\Gamma_A$ along $E_{[i_j,j]}$ yields the graph $\Gamma_{A'}$ where $A$ and $A'$ agree except in the $j$-th position where the $j$-th value of $A'$ is $i_j+1$. 
\end{construction}

\begin{proof}
    There are $8$ different local pictures of the graph to consider:

    \begin{enumerate}
        \item $j=M+1, i_j=M$ and $i_{j+1}=M$, in this case the local calculation is done on the left of Figure \ref{Fig:Quiver-Calculation-2};
        \item $j=M+1, i_j=M$ and $i_{j+1}<M$, the local calculation is done on the right of Figure \ref{Fig:Quiver-Calculation-2};
        \item $j>M+1$, $i_{j+1}=i_j$ and $i_{j-1}=i_j+1$. Then this calculation is done on the left of Figure \ref{Fig:Quiver-Calculation-3}, after inserting $l=j$.
        \item $j>M+1$, $i_{j+1}=i_j$ and $i_{j-1}>i_j+1$. Then this calculation is done on the left of Figure \ref{Fig:Quiver-Calculation-3}, after inserting $l=i_j$.
        \item $j>M+1$, $i_{j+1}>i_j$ and $i_{j-1}=i_j+1$. Then this calculation is done on the right of Figure \ref{Fig:Quiver-Calculation-3}, after inserting $l=j$.
        \item $j>M+1$, $i_{j+1}>i_j$ and $i_{j-1}>i_j+1$. Then this calculation is done on the right of Figure \ref{Fig:Quiver-Calculation-3}, after inserting $l=i_j$.
        \item $j=n$, $i_{n}=0$ and $i_{n-1}>1$. This case is similar to the case depicted on the left of Figure \ref{Fig:Quiver-Calculation-3} except that the right vertex is replaced with the half-edge $H_{n+1}$.
        \item $j=n$ and $i_{n}=0$. This case is similar to the case depicted on the left of Figure \ref{Fig:Quiver-Calculation-3} except that the right vertex is replaced with the half-edge $H_{n+1}$ and $l=i_{n}$ if $i_{n-1}=1$ and $l=1$ otherwise. 
        
    \end{enumerate}
    
\end{proof}

     \begin{figure}
    \begin{picture}(320,225)
    \put(0,0){\includegraphics[width=8cm]{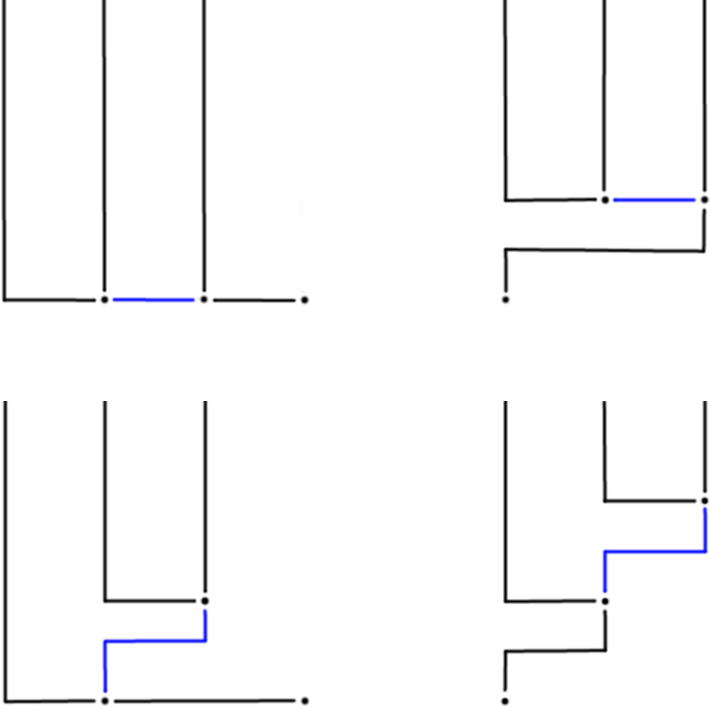}}
    
    \put(48,120){$L_j$}
    \put(82,120){$L_{j+1}$}
    
    \put(42,85){$\tau_{L_j}$}
    \put(40,10){$-L_j$}
    \put(50,-10){$L_{[j,j+1]}$}
    
    \put(200,170){$L_j$}
    \put(180,135){$-L_{[j-1,j]}$}

    \put(200,80){$\tau_{L_j}$}
    \put(230,50){$-L_j$}
    \put(180,5){$-L_{j-1}$}

    \end{picture}
    \caption{Two graph transitions, the initial graphs are at the top. The edge along we mutate is highlighted in blue. For the homotopy label transition on the left, we have used the Dehn-twist identity \ref{Dehn twist identity-left addition}. On the right, we have used the Dehn-twist identity \ref{Dehn twist identity- right substraction}.}  
    \label{Fig:Quiver-Calculation-2}
\end{figure}

     \begin{figure}
    \begin{picture}(350,200)
    \put(0,0){\includegraphics[width=10cm]{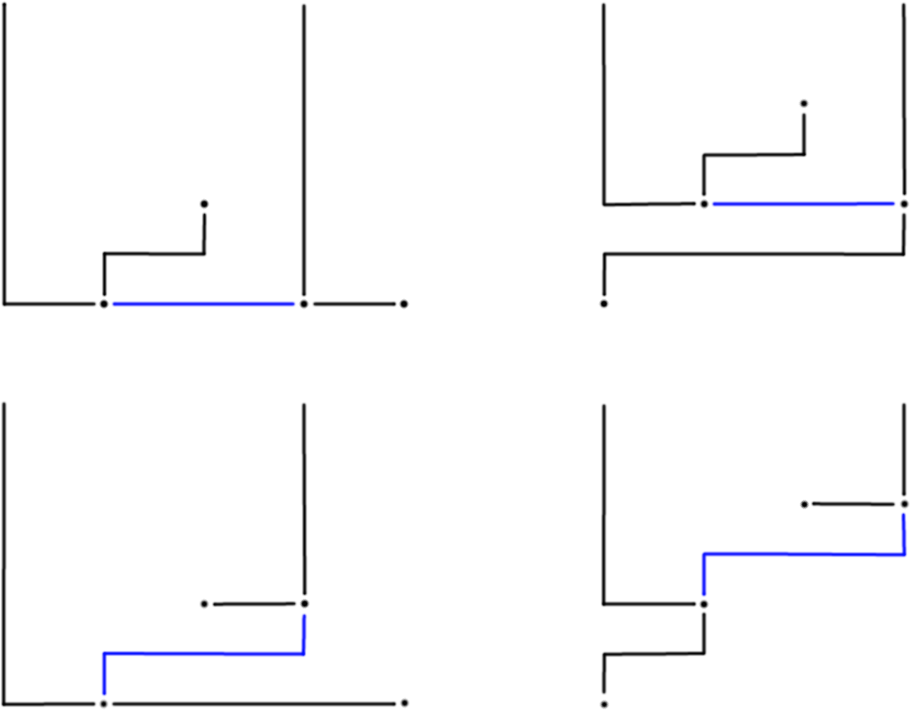}}
      
    \put(50,117){$L_{[i_j,j]}$}
    \put(100,117){$L_{j+1}$}
    \put(20,148){$-L_{[i_j,l]}$}
    
    \put(55,40){$L_{[l+1,j]}$}
    \put(45,22){$-L_{[i_j,j]}$}
    \put(70,-8){$L_{[i_j,j+1]}$}
    
    \put(235,148){$L_{[i_j,j]}$}
    \put(205,180){$-L_{[i_j,l]}$}
    \put(205,130){$-L_{[i_j-1,j]}$}

    \put(255,75){$L_{[l+1,j]}$}
    \put(220,40){$-L_{[i_j,j]}$}
    \put(190,5){$-L_{i_j-1}$}

    \end{picture}
    \caption{Two graph transitions, the initial graphs are at the top. The edge along we mutate is highlighted in blue. For the homotopy label transition on the left we have used the Dehn-twist identities  \ref{Dehn twist identity-right addition} and \ref{Dehn twist identity- right overspill}. On the right, we have used the Dehn-twist identities \ref{Dehn twist identity- right overspill} and \ref{Dehn twist identity- left overspill}}  
    \label{Fig:Quiver-Calculation-3}
\end{figure}

\section{Proof of Theorem \ref{Main theorem}}
This section is dedicated to proving the main theorem which we restate below for convenience: 

\begin{theorem}(Skein-valued lift of the $Q$-relations)
    Let $D_{n,tw}$ be the twisted n-holed disk from Figure \ref{Fig:Twisted Disk} and $L_1,\dots,L_n$ be the curves indicated there. Furthermore, fix an order $\alpha_1<\dots<\alpha_N$ on $[1,1],[1,2],\dots,[1,n],\dots,[n,n]$ such that the irreducible representations $V_{\alpha_i}$ obey that $V_{\alpha_j}<V_{\alpha_i}$ if $Hom_Q(V_{\alpha_i},V_{\alpha_j}) \neq 0$. Then the following relation holds in $Sk_\mathcal{C}(\mathcal{H}_n)$:

    \begin{equation} \label{Main equation}
        \mathbb{E}(L_{1})\dots \mathbb{E}(L_{n}) = \mathbb{E}(L_{\alpha_N})\dots\mathbb{E}(L_{\alpha_1})
    \end{equation}

    where $\mathcal{C}$ is the cone spanned by the $L_1,\dots,L_n$.
\end{theorem}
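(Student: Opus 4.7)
The plan is to deduce the theorem by exhibiting two different positive mutation sequences on the square graph $\Gamma^1_n$, both of which terminate at the canoe graph $\Gamma^2_n$ with the same homotopy classes on every edge, and then applying Proposition \ref{Psi's agree} to conclude that their boundary inclusions into $Sk_\mathcal{C}(\mathcal{H}_n)$ coincide. The short mutation sequence of Definition \ref{Short sequence definition} and Construction \ref{Short sequence construction} produces the left-hand side of Equation \ref{Main equation}, while the long mutation sequence of Definition \ref{Long sequence graph definition} and Construction \ref{Long sequence construction}, indexed by any admissible order on the intervals $[i,j]$, produces the right-hand side.

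First, I would use Lemma \ref{Quiver relation order} and Lemma \ref{Quiver-equivalent formulation of sequence} to translate the chosen order $\alpha_1 < \dots < \alpha_N$ into a sequence of $n$-tuples $a_0 = (0,\dots,0), a_1, \dots, a_N = (1,2,\dots,n)$ where $a_k$ is obtained from $a_{k-1}$ by incrementing a single entry according to property (4) of Lemma \ref{Quiver-equivalent formulation of sequence}. Iterating Construction \ref{Long sequence construction} along this sequence yields a chain $\Gamma^1_n = \Gamma_{a_0} \to \Gamma_{a_1} \to \dots \to \Gamma_{a_N}$ of positive mutations, where the $k$-th mutation is along an edge of homotopy class $L_{\alpha_k}$. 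Independently, iterating Construction \ref{Short sequence construction} yields the chain $\Gamma^1_n = \Gamma_{n+1} \to \Gamma_n \to \dots \to \Gamma_1$ whose $k$-th mutation is along an edge of homotopy class $L_{n-k+1}$.

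Second, I would verify that both sequences satisfy the hypotheses of Corollary \ref{Mutation sequence corollary}: all edges lie in a common strictly convex cone $\mathcal{C}' \subset H_1(\Sigma_n)$ (obtained by enlarging slightly the cone spanned by the $L_{[i,j]}$ and their Dehn twists appearing in the constructions) which pushes forward into the cone $\mathcal{C}$ spanned by $L_1,\dots,L_n$ in the handlebody, and none of the edges is contractible. Neither sequence touches the top row of vertices on the line $(\cdot, n)$, since all mutations occur in the subsquare $[-1,n+1]\times[-n-1,1]$, so the avoidance hypothesis of Proposition \ref{Psi's agree} is met. By Corollary \ref{Mutation sequence corollary} we obtain
\[
\Psi_{\text{short}} = \mathbb{E}(L_1)\mathbb{E}(L_2)\cdots \mathbb{E}(L_n) \quad\text{and}\quad \Psi_{\text{long}} = \mathbb{E}(L_{\alpha_N})\cdots \mathbb{E}(L_{\alpha_1})
\]
in $Sk_{\mathcal{C}'}(\Sigma_n \times I, \mathfrak{p}\times I)$.

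Third, I would check that the two final graphs $\Gamma_1$ and $\Gamma_{a_N}$ coincide as decorated cubic planar graphs: both have their lower vertices arranged along the diagonal $(k,-n+k)$, both have the diagonal edges $E_{[1,k]}$ connecting successive lower vertices, and both are attached to the fixed top row of $\Gamma^1_n$ via the same half-edge data. The homotopy classes of all edges match by the inductive homotopy labels tracked in Definitions \ref{Short sequence definition} and \ref{Long sequence graph definition}, which in both cases identify the final graph with $\Gamma^2_n$ carrying the edge homotopy classes dictated by the Dehn-twist identities of the preceding lemma. This makes the two mutation sequences \emph{mutation equivalent} in the sense of Definition preceding Proposition \ref{Psi's agree}, so that proposition applies and the images of $\Psi_{\text{short}}$ and $\Psi_{\text{long}}$ under the boundary inclusion $Sk_{\mathcal{C}'}(\Sigma_n \times I) \to Sk_\mathcal{C}(\mathcal{H}_n)$ agree, which is exactly Equation \ref{Main equation}.

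The only real obstacle is the bookkeeping step: verifying that for every admissible ordering $\alpha_1 < \dots <\alpha_N$ the long sequence terminates at $\Gamma^2_n$ with precisely the same edge homotopy classes as the short sequence. The eight local cases enumerated in the proof of Construction \ref{Long sequence construction} together with the Dehn-twist identities \ref{Dehn twist identity-left addition}--\ref{Dehn twist identity- left overspill} are calibrated to make this matching automatic at each step; once one trusts that invariant, the theorem follows immediately from Proposition \ref{Psi's agree}.
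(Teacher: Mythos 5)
Your proposal follows essentially the same route as the paper's own proof: the short mutation sequence gives $\mathbb{E}(L_1)\cdots\mathbb{E}(L_n)$, the long sequence indexed by the tuples of Lemma \ref{Quiver-equivalent formulation of sequence} gives $\mathbb{E}(L_{\alpha_N})\cdots\mathbb{E}(L_{\alpha_1})$, both end at the canoe graph $\Gamma^2_n$ while avoiding the top row of vertices, and Proposition \ref{Psi's agree} then identifies the two images in $Sk_\mathcal{C}(\mathcal{H}_n)$. The only slip is cosmetic: the diagonal edges of the common terminal graph are the $E_k$ with homotopy class $-L_k$ (not $E_{[1,k]}$), which does not affect the argument.
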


\begin{proof}
    Consider the square graph $\Gamma^1_n$ and the lift of its edges to $\Sigma_n$ as in Construction \ref{Construction of filling}. There it was shown that the branched lift of the subsquare $\Box:=[-1,n+1]\times [-n-1,1]$ and the edges completely contained therein lead to an identification with $D_{n,tw}$ and the set of curves $L_1,\dots,L_n$, as in Figure \ref{Fig:Twisted Disk}. Furthermore, we fix a filling $\mathcal{H}_n$ of $\Sigma_n$ as in Construction \ref{Construction of filling} which contracts the edges labelled with $M_i$'s in Figure \ref{Fig:Quiver-Graphs}. This allows us to identify $\mathcal{H}_n$ with $D_{n,tw} \times I$. We observe, as in Subsection \ref{Section 3.1.}, that the boundary inclusion $\Sigma_n \times I \rightarrow \mathcal{H}_n$ restricts to the left stacking action for links which live inside $D_{n,tw} \times I \subset \Sigma_n \times I$. This defines our algebra structure on $Sk_\mathcal{C}(\mathcal{H}_n)$ and the $Sk_\mathcal{C}(\Sigma_n)$-action on $Sk_\mathcal{C}(\mathcal{H}_n)$.

    Next, we define the left-hand side of Equation \ref{Main equation}: Direct inspection yields that $\Gamma_n^1$ and $\Gamma_{n+1}$, from Definition \ref{Short sequence definition}, coincide, after identifying the appropriate vertices. See the beginning of Subsection \ref{Mutation sequences section} for the discussion of these identifications. Then by Construction \ref{Short sequence construction}, we may iteratively apply positive mutations along the edges $E_{n},\dots,E_1$ to define a sequence of positive graph mutations $\Gamma_{n+1},\dots,\Gamma_1$ with $\Gamma_1=\Gamma^2_n$ after identifying the appropriate vertices. All the edges appearing here live in the subsquare $\Box$ and thus so do the elements $\mathbb{E}(L_{n}),\dots, \mathbb{E}(L_{1})$. This also implies that this sequence of mutations avoid all vertices not contained in this square. We set $\Psi_L:=\mathbb{E}(L_{1})\dots\mathbb{E}(L_{n})$.

    We turn to the right-hand side: Here, we need to consider the order $\alpha_1,\dots,\alpha_N$ of the intervals $[1,1],[1,2],\dots[1,n],\dots,[n,n]$ that we chose initially. Denote by $a_0,\dots,a_n$ the associated sequence from Lemma \ref{Quiver-equivalent formulation of sequence}. We observe that $\Gamma_{a_0}=\Gamma^1_n$, since $a_0=(0,\dots,0)$. Now, by Property $4$ of Lemma \ref{Quiver-equivalent formulation of sequence}, we know that the interval $\alpha_{k+1}=[i,j]$ must be adapted to $A_k$ in such a way that after deleting the first $M_k$ elements of $A_k$, $j$ is the beginning position for some constant subsequence. However, this translates to the fact that the graph $\Gamma_{A_k}$ contains some edge $E_{\alpha_{k+1}}$ homotopic to $E_{\alpha_{k+1}}$. By Construction \ref{Long sequence construction}, we obtain the graph $\Gamma_{A_{k+1}}$ after positive mutation along this edge. So, iteratively we can positively mutate along a sequence of edges $E_{\alpha_1},\dots,E_{\alpha_N}$ which are homotopic to $L_{\alpha_1},\dots,L_{\alpha_N}$ which induce the sequence of graphs $\Gamma_{a_0},\dots,\Gamma_{a_N}$. One readily observes that $a_N=(1,\dots,n)$ and that $\Gamma_{(1,\dots,n)}=\Gamma^2_n$ after identifying the appropriate vertices. Again all the edges along which we mutate live in the subsquare $\Box$ and thus do the elements $\mathbb{E}(L_{\alpha_1}),\dots,\mathbb{E}(L_{\alpha_N})$. Again, this implies that this sequence of mutations avoids all vertices which are not contained in this square and we set $\Psi_R:= \mathbb{E}(L_{\alpha_N})\dots\mathbb{E}(L_{\alpha_1})$.

    The sequence $L_n,\dots,L_1$ and $L_{\alpha_1},\dots,L_{\alpha_N}$ are mutation equivalent as both sequences start with the graph $\Gamma^1_n$ and end with the graph $\Gamma^2_n$ under the appropriate identification of vertices. In addition, both sequences avoid the vertices on the line $(\cdot,n)$ and so by Proposition \ref{Psi's agree} $\Psi_L=\Psi_R$ after including along $Sk_\mathcal{C}(\Sigma_n \times I) \rightarrow Sk_\mathcal{C}(\mathcal{H}_n)$. Furthermore, as all elements live in $D_{n,tw} \times I$ the boundary inclusion restricts to the stacking operation.
\end{proof}

\bibliographystyle{hplain}
\bibliography{skeinrefs}

\end{document}